 \theoremstyle{definition}
 \theoremstyle{remark}
 \numberwithin{equation}{section}
\begin{document}

%
%
%
%
%
%
%
%
%

\title[Perturbation of Sectorial Projections]
 {Perturbation of Sectorial Projections of\\ Elliptic
Pseudo-differential Operators}


\author[B. Boo{\ss}--Bavnbek]{\small Bernhelm Boo{\ss}--Bavnbek}

\address{Department of Sciences, IMFUFA\\ Roskilde
University, DK-4000 Ros\-kilde, Denmark}
\email{booss@ruc.dk}
\urladdr{http://milne.ruc.dk/$\sim$booss}

\thanks{The third author was partially supported by the
        Hausdorff Center for Mathematics. 
The fourth author was partially
supported by 973 Program of MOST No. 2006CB805903, Key Project of
Chinese Ministry of Education.(No 106047), IRT0418,  PCSIRT
NO.10621101, LPMC of MOE of China, and Nankai University. }

\author[G. Chen]{Guoyuan Chen}
\address{Chern Institute of Mathematics, Key Lab of Pure Mathematics
and Combinatorics of Ministry of Education,\\ Nankai University,
Tianjin 300071, \\the People's Republic of China}
\email{gychen@mail.nankai.edu.cn}

\author[M. Lesch]{Matthias Lesch}
\address{Mathematisches Institut,\\
Universit\"at Bonn, Endenicher Allee 60, 53115 Bonn, Germany}
\email{ml@matthiaslesch.de, lesch@math.uni-bonn.de}
\urladdr{www.matthiaslesch.de, www.math.uni-bonn.de/people/lesch}
\author[C. Zhu]{Chaofeng Zhu}
\address{Chern Institute of
Mathematics, Key Lab of Pure Mathematics and Combinatorics of
Ministry of Education,\\ Nankai University, Tianjin 300071,\\ the
People's Republic of China}
\email{zhucf@nankai.edu.cn}

\subjclass[2010]{Primary 58J40; Secondary 58J37; 58J50; 58J05}

\keywords{Sectorial projections; elliptic operators;
pseudo-differential operators; non-symmetric operators; Calder\'{o}n
projection; Cauchy data spaces}

\begin{abstract}
Over a closed manifold, we consider the sectorial projection of an
elliptic pseudo-differential operator $A$ of positive order with two
rays of minimal growth. 
We show that it depends continuously on $A$ when the space of pseudo-differential
operators is equipped with a certain topology which we explicitly describe.
Our main application deals with a continuous curve of arbitrary first order linear elliptic
differential operators over a compact manifold with boundary. Under
the additional assumption of the weak inner unique continuation
property, we derive the continuity of a related curve of
Calder\'{o}n projections and hence of the Cauchy data spaces of the
original operator curve.

In the Appendix, we describe a topological obstruction against a
verbatim use of R. Seeley's original argument for the complex
powers, which was seemingly overlooked in previous studies of the
sectorial projection.

\end{abstract}

\maketitle

\section{Introduction}\label{s:intro}

This note describes how the continuity of a curve of operators (here
of sectorial projections) can be derived within the symbolic
calculus, supplemented by estimates of some smoothing operators. As
usual, the smoothing operators appear as correction terms between
pseudo-differential operators and the operators generated by their
total symbol.

The power of the symbolic calculus is well established for the
investigation of spectral invariants, e.g., derived from asymptotics
of the heat kernel. There, perturbations by smoothing operators have
no effect. However, the symbolic calculus may appear as having no
value for establishing the precise continuity of an operator curve
since, {\em a priori}, the variation of the operator norm of
emergent smoothing operators may be hard to control. This note
refutes that view.

\subsection{Various definitions of sectorial projections for elliptic
pseudo-differential operators of positive order}\label{ss:vrious}

\subsubsection{The bounded and the closed self-adjoint cases}\label{sss:bounded}
Let $\mathcal{B}(H)$ denote the space of bounded operators in a
complex separable Hilbert space $H$ and let $A\in \mathcal{B}(H)$.
Assume that there exists a curve $\gG_+\<\C\setminus \spec A$ that
divides $\C$ into two sectors $\gL_\pm$ as in Figure
\ref{f:gamma-plus}a below. Then we can encircle all spectral points in the
positive sector $\gL_+$ by a closed curve $\gG_0$, as in Figure
\ref{f:gamma-plus}b, and so get a well-defined projection, the {\em
sectorial projection}, by setting
\begin{equation}\label{e:bounded}
P_{\gG_+}(A)\, : \, = \, \frac {-1}{2\pi i}\int_{\gG_0} (A-\gl)\ii
d\gl\,.
\end{equation}
From the integral it is clear that
\begin{equation}\label{e:bounded-estimate}
\|P_{\gG_+}(A+B)-P_{\gG_+}(A)\|< C_A\|B\| \text{ for any small
bounded perturbation $B$},
\end{equation}
i.e., the map $P_{\gG_+}:A\mapsto P_{\gG_+}(A)$ is continuous in the
operator norm of $\mathcal{B}(H)$.

The general functional analytical arguments break down for (graph
norm) continuous curves of densely defined closed operators in $H$.
Actually, in our \cite[Example 3.13]{BooLesZhu:CPD} examples of
operators with unbounded sectorial projection were discussed. From
the example it becomes clear that additional assumptions will be
required.

Most easy is to require that $A$ is self-adjoint: Consider a (graph
norm) continuous curve in the space $\mathcal{C}^\sa(H)$ of densely
defined closed self-adjoint operators in $H$. The preceding
perturbation argument generalizes immediately to this case under the
additional condition, that the {\em Riesz transformation}
$F:\mathcal{C}^{\sa}\to \mathcal{B}^{\sa}\/, A\mapsto
F(A):=(I+A^2)^{-1/2}A$ is continuous. It may be worth mentioning
that a counter example (a graph-norm convergent sequence of
unbounded self-adjoint Fredholm operators with divergent Riesz
transforms) was given to us by B. Fuglede several years ago and
elaborated in our \cite[Example 2.14]{BooLesPhi:UFO}. The condition
is satisfied, however, for formally self-adjoint elliptic
differential operators on closed manifolds: they have a discrete
spectrum of finite multiplicity contained in $\R$ and a complete set
of eigenvectors. So, the imaginary axis (or a parallel $\{c + ri\mid
r\in\R\}$ with $c\not\in \spec A$) becomes a suitable separating
curve $\gG_+$ and we obtain
$P_{\gG_+}(A)=1_{[c,\infty)}(A)=1_{[F(c),\infty)}(F(A))$ as a
pseudo-differential projection (the {\em Atiyah-Patodi-Singer} (APS)
projection) by applying the integral representation of
\eqref{e:bounded} to the bounded Riesz transform
 of $A$. Note that $F(A)$ has its spectrum
contained in the interval $(-1,1)$, but has the same eigenspaces and
sectorial projection as $A$. We refer to our \cite[Propositions
7.14-7.15]{BooLesZhu:CPD} (see also \cite[Thm. 4.8]{BooFur:MIF} for
a wider purely functional analytic setting) for a proof of the
continuity of the Riesz transformation $A\mapsto F(A)$ on the space
of self-adjoint elliptic differential operators. That yields the
well-known continuous variation of the APS projection under
continuous variation of the underlying operator as long no
eigenvalue crosses the line $\gG_+$\/, i.e., the continuity of the
map $A\mapsto P_{\gG_+}(A)$, when we take the operator norm $L^2\to
L^2$ for $P_{\gG_+}(A)$ and the operator norm $H^m\to L^2$ for $A$,
where $m$ denotes the order of $A$.

\begin{figure}
\hfill
\begin{minipage}{0.55\textwidth}
\begin{center}
\includegraphics[scale=1.]{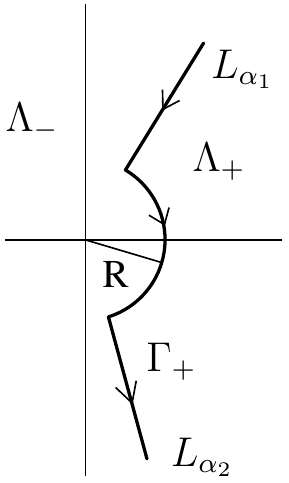}
\end{center}
\end{minipage}
\hfill
\begin{minipage}{0.4\textwidth}
\includegraphics[scale=1.]{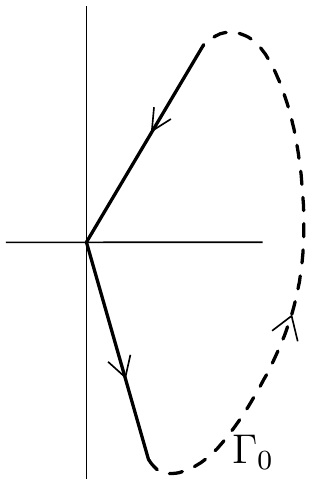}
\end{minipage} \hfill \caption{Left:
{Two} rays of minimal growth and an arc, making the spectral cut
curve $\Gamma_+$\/. Right:
Specifying a bounded set of eigenvalues by a separating
curve $\Gamma_+$ made of two rays and capturing it by a closed
contour $\Gamma_0$.}
 \label{f:gamma-plus}
\end{figure}

\subsubsection{Spectral integrals for elliptic pseudo-differential operators of positive order}
\label{sss:spectral-integrals}
It seems that no general functional analysis methods are available to obtain continuous curves of
sectorial projections for arbitrary continuous curves of operators with compact resolvent and
two rays of minimal growth, if
the operators are neither bounded nor self-adjoint.

As explained in our \cite[Section 3.2]{BooLesZhu:CPD}, a semigroup
$\{Q_+(x,A)\}_{x>0}$ of sectorial operators can be defined by
inserting a weight $e^{-\gl x}$ into the integral \eqref{e:bounded}.
Then {\em sectorial projections} can be defined asymptotically in an
abstract Hilbert space framework. More precisely, for a closed, not
necessarily self-adjoint operator $A$ in separable Hilbert space
with compact resolvent and minimal growth of the resolvent in a cone
we may take the closure of the densely defined $\lim _{x\to
0+}Q_+(x,A)$. However, such projections are unbounded operators, in
general, and do not necessarily vary continuously under perturbation
of the underlying operator, see, once again, \cite[Example
3.13]{BooLesZhu:CPD}. Consequently, one
has to exploit the {\em symbolic calculus} for the investigation of
sectorial projections of not necessarily self-adjoint elliptic
pseudo-differential operators of positive order with two rays of
minimal growth of the resolvent.

Actually, in a slightly different context (namely dealing with
well-posed boundary problems), it was already noticed in
\textsc{Burak} \cite{Bur:SPE} that sectorial projections are bounded
operators. For an elliptic pseudo-differential operator of positive
order over a smooth closed (compact and without boundary) manifold,
that approach was worked out in \textsc{Wodzicki} \cite{Wo:SANR,
Wo:C} and the more recent \textsc{Ponge} \cite{Pon:SAZ} and {\sc
Gaarde} and {\sc Grubb} \cite{GaaGru:LSP}. In some of these papers,
the positive sectorial projection plays a prominent role in more
refined questions related to spectral asymmetry.

Before preceding, we fix the notation:
\begin{convent}\label{c:convent}
{\bf (a)} Let $M$ be an $n$-dimensional closed Riemannian manifold
and $\pi:E\rightarrow M$ a Hermitian vector bundle. Let $A:\Ci(M;E)
\to \Ci(M;E)$ be an elliptic pseudo-differential operator of order
$m>0$.

{\bf (b)} Let $\spec(A)$ denote the spectrum of $A$ regarded as an
operator in $L^2(M;E)$ with the Sobolev space $H^m(M;E)$ as its
domain. We recall that $\spec(A)$ is either the whole complex plane
or a discrete subset of $\C$. The reason is simply that the
resolvent, if it exists, is compact (see \textsc{Shubin}
\cite[Theorem 8.4]{Shu:POS}, similarly already in \textsc{Agmon}
\cite[Section 2]{Agm:EEG} for well-posed elliptic boundary value
problems). Clearly, $\ind A \ne 0$ implies $\spec A=\C$.

{\bf (c)} Let $L_{\alpha_1}=\{\lambda\in \mathbb{C}\mid {\rm
arg}\,\lambda=\alpha_1\}$ and $L_{\alpha_2}=\{{\lambda\in
\mathbb{C}\mid \rm arg}\,\lambda=\alpha_2\equiv \alpha_1-\theta \mod
2\pi \}$ ($0<\theta< 2\pi$) be two rays. We assume that $A$ has only
a finite number of eigenvalues on the rays $L_{\alpha_j}, j=1,2$.

{\bf (d)} Let $\gL := \{re^{i\ga}\mid r<2\rho \text{ or } |\ga
-\ga_j|< \gve, j=1,2\}$ for $\rho,\gve >0$ and $\gve$ sufficiently
small. We can choose $\rho$ in such a way that there exists an $R\in
[0,\rho]$ such that $A-\gl$ is invertible for $\gl\in\gL$ with
$|\gl|\ge R$, and there is only a finite number of eigenvalues in
the region $\gL_R:=\{\gl\in\gL\mid |\gl|<R\}$. For an elaboration of
the meaning of such spectral cuttings, also called {\em rays of
minimal growth} (of the resolvent $(A-\gl)\ii$), see Subsection
\ref{ss:definition} below. If $A$ is differential, then $A-\gl$ is
{\em elliptic with respect to the parameter} $\gl\in \gL$ for
sufficiently small $\rho,\gve >0$ (for that concept c.f.
\textsc{Seeley} \cite{See:CPE} or \textsc{Shubin} \cite{Shu:POS}).

{\bf (e)} Now we choose the curve
\begin{equation}\label{curve}
\Gamma_+=\left\{re^{i\alpha_1}\mid \infty>r\geq R\right\} \cup
\left\{Re^{i(\ga_1-t)}\mid 0  \leq t\leq \theta\right\} \cup
\left\{re^{i\alpha_2}\mid R\leq r<\infty\right\}
\end{equation}
in the resolvent set of $A$, see Figure \ref{f:gamma-plus}a.

{\bf (f)} We define an operator in the following form:
\begin{equation}\label{prej}
P_{\gG_+}(A)=-\frac{1}{2\pi
i}A\int_{\Gamma_+}\lambda^{-1}(A-\lambda)^{-1}d\lambda.
\end{equation}

\end{convent}

{\em A priori}, the integral \eqref{prej} gives rise to an unbounded operator.
A common error in the literature is a {\em verbatim} use of the arguments of
\cite[Theorem 3]{See:CPE} to prove that $P_{\gG_+}(A)$ is a pseudo-differential
operator of order 0,
see \textsc{Wojciechowski} \cite{Woj:SFG}, uncritically reproduced in \textsc{Nazaikinskii et
al.} \cite{NSSS:SBV}, \textsc{Ponge} \cite[Proof of Proposition 3.1]{Pon:SAZ}, and
{\sc Gaarde} and {\sc Grubb} \cite{GaaGru:LSP}. In the Appendix, we shall explain the
topological obstructions that make the argument defective.

Anyway, in \cite[Proof of Proposition 4.1]{Pon:SAZ}, a beautiful
formula is proved, which, according to \textsc{Ponge} was already
observed by \textsc{Wodzicki} in 1985: Assume the preceding
conventions and, moreover, that  $A$ is a classical
pseudo-differential operator. Then we have
\begin{equation}\label{e:wodzicki}
A^s_{\ga_2}-A^s_{\ga_1}=(1-e^{2i\pi s})P_{\gG_+}(A)A^s_{\ga_2}\quad\text{ for all $s\in\C$}.
\end{equation}
The formula relates the complex powers (which were well established
as pseudo-differential operators in \cite{See:CPE}) and the
sectorial projection. Multiplying by $A^{-s}_{\ga_2}$ from the right
yields that the sectorial projection is a pseudo-differential
projection.


\subsection{Perturbations of sectorial projections}

Unfortunately, the authors of this note were not able to derive a
true perturbation result for the sectorial projections from
\textsc{Wodzicki}'s formula. Clearly, other authors before us have
noticed the delicacy of the variation of complex powers of an
operator with a ray of minimal growth. For studying variations of
trace formulas, \textsc{K. Okikiolu} \cite[Section 4]{Oki:CHT},
e.g., defines a {\em symbol-smooth} family of pseudo-differential
operators by the smoothness of the (total) symbol in the usual $\Ci$
Fr{\'e}chet topology. Then she is able to prove the
symbol-smoothness of the complex powers, and the symbol-smoothness
of the sectorial projections follows by \eqref{e:wodzicki}.
\textsc{Okikiolu}'s approximative approach is related to \textsc
{H{\"o}rmander}'s symbolic construction of an ``almost" \Calderon\
projection in \cite[Theorem 20.1.3]{Hor:ALPIII}. Admittedly, such
approximative constructions can be of great value in some contexts.
Actually, \textsc{Okikiolu}'s result suffices for proving her trace
variation results. More precisely, in \cite[Section 4]{Oki:CHT}, 
Okikiolu defined a topology on the space of pseudo-differential
operators by the smoothness of `total' symbols in any local
coordinates. This topology is sufficient for her purpose to study
the variations of trace formulas: it can not miss any smooth
operator whose trace is not zero. However, for our aim to study the
perturbation of sectorial projections in the operator norms, it
seems that the topology defined by Okikiolu is not sufficient. It
may miss the smoothing operators with support far away from the
diagonal of $M\times M$. Such smoothing operators have zero traces,
but may have large operator norms. In this note, we define a locally
convex topology, which treats all the lower terms in bulk and does
not concern the local charts.

Similarly, we can neglect even compact perturbations when we are
interested in index theory. For addressing uniform structures and
perturbation results in that direction we refer to \textsc{Eichhorn}
\cite{Eic:ITG} who presents a systematic study of compact
perturbations of generalized Dirac operators. However, for deciding
about the continuous or not continuous variation of the true
sectorial projections, we are not allowed to neglect contributions
from smoothing operators. We recall that exactly such operators
appear as error terms between pseudo-differential operators and
their approximative symbolic representation.

To obtain estimates for the precise sectorial projections, we shall
therefore not follow the elegant approach by \textsc{Burak} and
\textsc{Wodzicki} et al. \footnote{Having been informed about our
results, Prof. Grubb has, perhaps rightly, pointed out to us, that
sharper perturbation results might be achievable, as it often
happens {\em a posteriori} in mathematics, by alternative methods:
namely, by exploiting the description of the sectorial projection by
a difference of two logarithms in \cite[Prop. 4.4]{GaaGru:LSP}.
Unfortunately, no details regarding the perturbation problem were
communicated by Prof. Grubb in her subsequent comments \cite{Gru:PC}
to an earlier arXiv version of this note.} We choose an approach
which does not require any technology beyond symbolic calculus and
standard estimates for integral operators. From a technological
point of view, our approach may appear less elegant than using
logarithms or complex powers, but it is elementary, transparent and
self-contained - and it works. The delicacy of \textsc{A. Axelsson,
S. Keith and A. McIntosh} \cite{AKM:QEF} indicates that there is no
easy way through to be expected. They studied the Hodge-Dirac
operator $D_g$ defined on a closed Riemannian manifold with metric
$g$. In general, $D_g$ is non-self-adjoint, and its spectrum is
contained in an open double sector which includes the real line.
They showed --  by harmonic analysis methods -- that the spectral
projections of the Hodge-Dirac operator $D_g$ depend analytically on
$L_{\infty}$ changes in the metric $g$.

Hence, {\em a priori}, the variational properties of the symbol do
not suffice for establishing the continuous variation of
$P_{\gG_+}(A)$ in the topology of the operator norm of a suitable
Sobolev space. A smoothing operator may have a large operator norm
defined on any Sobolev space. Therefore, our approach is inspired by
\textsc{Seeley}'s \cite[Corollary 2]{See:CPE}. We only need slightly
sharper estimates than \textsc{Seeley}'s original work.

\subsection{Main result}

\subsubsection{Topology and formulation of main theorem}

Let ${\rm Ell}^m_{\Gamma_+}(M,E)$ denote the space of all elliptic
{\em principally classical} pseudo-differential operators $A$ of
order $m>0$ on $M$ acting on sections of the bundle $E$ such that
the leading symbol $a_m$ of $A$ has no eigenvalues on the two rays
$L_{\ga_j}, j=1,2$ and $A$ no eigenvalues on $\Gamma_+$. Taking for
granted that the spaces $\CL^m(M,E)$ and $\pdo^{m-1}(M,E)$ are well
known (we also recall them in Section \ref{ss:pdos}), we use the
notation {\it principally classical pseudo-differential operators}
$\pdo_{\SC} ^m(M, E) := \CL^m(M,E) + \pdo^{m-1}(M,E)$ for standard
pseudo-differential operators with a homogeneous principal symbol,
where the principal symbol denotes the class of the operator modulo
operators of lower order (for details see below Section
\ref{ss:Semiclassical}). Hence, while we do not restrict our
estimates to classical pseudo-differential operators, we must
require a homogeneity of the principal symbol.

We equip the space ${\rm Ell}^m_{\Gamma_+}(M,E)$ with the locally
convex topology $\mathcal T$ described in Section
\ref{ss:Semiclassical} below. It is not surprising that our topology
is stronger than the usual operator topology between Sobolev spaces
for pseudo-differential operators of fixed order $m$ on closed
manifolds, see \textsc{Atiyah, Singer} \cite{AtSi:IEOI}. For our
applications, continuous or smooth variation of all the symbols does
not suffice. In addition, we shall require that all the derivatives
of the principal symbol vary continuously. The necessity of rather
restrictive requirements for the variation of the highest order
coefficients was indicated in our \cite[Section 7]{BooLesZhu:CPD}
where we emphasized the elementary character of perturbations of
lower order and the delicacy of perturbations of highest order for
the continuous variation of the \Calderon\ projection.

In Section \ref{ss:operator-type}, we give an elementary proof 
for each $A\in {\rm Ell}^m_{\Gamma_+}(M,E)$, that the operator
$P_{\gG_+}(A)$ is well defined by \eqref{prej} as a bounded operator
on $H^s(M;E), s\in\R$. The following theorem is our main result:

\begin{theorem}\label{mt}
With respect to the topology $\mathcal T$, the map
\begin{equation}\label{c}
P_{\gG_+}: {\rm Ell}^m_{\Gamma_+}(M,E)\rightarrow
\mathcal{B}(H^s(M;E)),\quad A\mapsto P_{\gG_+}(A)
\end{equation}
is continuous. Here $\mathcal{B}(H^s(M;E))$ denotes the set of
bounded linear operators on $H^s(M;E), s\in\R$.
\end{theorem}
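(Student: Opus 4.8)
Fix $A_{0}\in{\rm Ell}^{m}_{\Gamma_{+}}(M,E)$ and $s\in\R$; the task is to bound $\|P_{\gG_{+}}(A)-P_{\gG_{+}}(A_{0})\|_{\mathcal B(H^{s}(M;E))}$ for $A$ in a $\mathcal T$-neighbourhood $U$ of $A_{0}$. By the construction of $\mathcal T$ in Section~\ref{ss:Semiclassical}, $\mathcal T$-closeness to $A_{0}$ means that the principal symbol of $A$ is close to that of $A_{0}$ in the $\Ci$-topology --- that is, with all derivatives --- and that the lower-order remainder of $A$ is close to that of $A_{0}$ in the operator norm of $\pdo^{m-1}(M,E)$. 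Since the principal symbol of $A_{0}$ has no eigenvalues on $L_{\ga_{1}}\cup L_{\ga_{2}}$, the $\Ci$-control makes $A-\gl$ uniformly parameter-elliptic along the two rays for $A\in U$; hence $U$ and a radius $R_{1}\ge R$ can be chosen so that $A-\gl$ is invertible for every $A\in U$ and every $\gl\in\gG_{+}$ with $|\gl|\ge R_{1}$, and
\begin{equation*}
\|(A-\gl)^{-1}\|_{H^{t}(M;E)\to H^{t+j}(M;E)}\le C\,|\gl|^{-1+j/m},\qquad 0\le j\le m,
\end{equation*}
uniformly in $A\in U$ and in $\gl$. Together with the discreteness of $\spec A$ this shows that ${\rm Ell}^{m}_{\Gamma_{+}}(M,E)$ is $\mathcal T$-open and that $P_{\gG_{+}}$ is well defined on $U$ by \eqref{prej} and by Section~\ref{ss:operator-type}; moreover, since $\mathcal T$ is stronger than the order-$m$ operator topology and $A_{0}$ has no spectrum on the compact part $\gG_{+}\cap\{|\gl|\le R_{1}\}$, the contribution of that compact part to \eqref{prej} varies continuously in $\mathcal B(H^{s}(M;E))$, so only the integral over the two unbounded rays remains. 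Finally, fix a quantization $q$ of principal symbols, write $A_{0}=q(p_{0})+C_{0}$ and $A=q(p)+C$ with $p_{0},p$ the principal symbols and $C_{0},C\in\pdo^{m-1}(M,E)$, and factor the perturbation through the intermediate operator $q(p)+C_{0}$.

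For the passage from $q(p)+C_{0}$ to $A=q(p)+C$ the principal symbol does not move, so $A-(q(p)+C_{0})=C-C_{0}\in\pdo^{m-1}(M,E)$ and the resolvent identity gives
\begin{equation*}
P_{\gG_{+}}(A)-P_{\gG_{+}}\bigl(q(p)+C_{0}\bigr)=\frac{1}{2\pi i}\int_{\gG_{+}}(A-\gl)^{-1}\,(C-C_{0})\,\bigl(q(p)+C_{0}-\gl\bigr)^{-1}\,d\gl ,
\end{equation*}
the right-hand side being absolutely convergent because $C-C_{0}$ has order $m-1$. Indeed $C-C_{0}$ maps $H^{s}$ to $H^{s-m+1}$, so by the uniform resolvent bounds with $j=0$ on the right factor and $j=m-1$ on the left, the integrand has $\mathcal B(H^{s})$-norm $O(|\gl|^{-1-1/m})\,\|C-C_{0}\|_{\pdo^{m-1}}$ on the rays and is continuous on the compact part; the $\gl$-integral converges, and this contribution is bounded by a constant, uniform on $U$, times $\|C-C_{0}\|_{\pdo^{m-1}}$, hence is $\mathcal T$-continuous.

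The substantial step is the passage from $A_{0}=q(p_{0})+C_{0}$ to $q(p)+C_{0}$, which moves only the principal symbol; here I would adapt \textsc{Seeley} \cite[Corollary~2]{See:CPE}. On the rays, for $|\gl|\ge R_{1}$, build from the first $N+1$ steps of the symbolic recursion a parameter-dependent parametrix $B_{N}(p,\gl)$ of $q(p)+C_{0}-\gl$, a pseudo-differential operator of order $-m$ whose symbol in each chart is a universal expression in $p$, finitely many of its derivatives, the fixed symbol of $C_{0}$, and $(p-\gl)^{-1}$, with $\bigl(q(p)+C_{0}-\gl\bigr)B_{N}(p,\gl)=I-\mathcal R_{N}(p,\gl)$ where $\mathcal R_{N}(p,\gl)$ has order $-N$ and $\|\mathcal R_{N}(p,\gl)\|_{H^{t}\to H^{t}}=O(|\gl|^{-N/m})<\tfrac12$ (after enlarging $R_{1}$). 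Then $\bigl(q(p)+C_{0}-\gl\bigr)^{-1}=B_{N}(p,\gl)+B_{N}(p,\gl)\mathcal R_{N}(p,\gl)\bigl(I-\mathcal R_{N}(p,\gl)\bigr)^{-1}$ on the rays. Inserting this into \eqref{prej} and using $\bigl(q(p)+C_{0}\bigr)B_{N}(p,\gl)=\bigl(I-\mathcal R_{N}(p,\gl)\bigr)+\gl B_{N}(p,\gl)$ to carry the outer factor past $B_{N}$, the $B_{N}$-contribution becomes a combination of contour integrals of $\mathcal R_{N}(p,\gl)$ and of $B_{N}(p,\gl)$, whose only non-decaying, scalar, part cancels against the contour integral of $\gl^{-1}I$ (a constant). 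By Seeley's analysis --- here applied only to the finitely many terms of a parametrix of finite length, thereby avoiding the obstruction to a verbatim use of the complex-power argument of \cite{See:CPE} that is described in the Appendix --- these integrals define pseudo-differential operators whose symbols depend continuously on $p$ in the $\Ci$-topology, hence this contribution depends continuously on $p$ in $\mathcal B(H^{s}(M;E))$, by the standard bound for the $H^{s}$-operator norm of a pseudo-differential operator in terms of finitely many symbol seminorms. The remainder contribution is $q(p)+C_{0}$ (of order $m$, bounded $H^{s+m}\to H^{s}$) applied to $-\tfrac{1}{2\pi i}\int\gl^{-1}B_{N}(p,\gl)\mathcal R_{N}(p,\gl)\bigl(I-\mathcal R_{N}(p,\gl)\bigr)^{-1}d\gl$, which for $N$ large is a smoothing operator of small $\mathcal B(H^{s},H^{s+m})$-norm, again depending continuously on $p$.

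Combining the two passages with the compact part yields the continuity of $P_{\gG_{+}}$ at the arbitrary point $A_{0}$. The one genuinely delicate ingredient is hidden in the previous paragraph: a smoothing operator can have arbitrarily large operator norm on a fixed Sobolev space while its symbol is small --- for instance if its Schwartz kernel is supported far from the diagonal of $M\times M$ --- so $\|\mathcal R_{N}(p,\gl)\|$, $\|B_{N}(p,\gl)\|$ and their differences for two nearby principal symbols cannot be read off from symbol seminorms alone. This is precisely why $\mathcal T$ treats the lower-order part in bulk and requires \emph{all} derivatives of the principal symbol to vary continuously, and it is the point at which \textsc{Seeley}'s estimates must be sharpened: one writes the Schwartz kernels of $\mathcal R_{N}(p,\gl)$ and $B_{N}(p,\gl)$ as oscillatory integrals, integrates by parts in the covariable to exchange powers of $|\gl|$ for decay off the diagonal, and then applies a Schur-type (Cauchy--Schwarz) estimate in the spirit of \cite[Corollary~2]{See:CPE} --- but keeping track of the dependence on $|\gl|$ and on finitely many $\Ci$-seminorms of the principal symbol simultaneously, so that all the $\gl$-integrals above converge with operator-norm bounds continuous in $p$. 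Everything else --- the compact part, the lower-order passage, and the purely symbolic contributions --- is routine.
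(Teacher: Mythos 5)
Your proposal is correct in spirit and identifies the right core difficulty, but it organizes the argument quite differently from the paper. Where you factor the perturbation in two stages (first the lower-order remainder via the resolvent identity, then the principal symbol via an $N$-term parameter-dependent parametrix $B_N(p,\gl)$ and a Neumann series for $(I-\mathcal R_N)^{-1}$), the paper treats the whole perturbation $\Delta A$ at once: by Lemma~\ref{l:first-reduction} it suffices to control $\Phi(A)=\int_{\gG_+}\gl^{-1}(A-\gl)^{-1}d\gl$, and by Proposition~\ref{p:first-approximation} the symbol-level piece $\Phi_0(a_m)$ is continuous in $a_m$; the remaining estimate \eqref{aim} on $(A+\Delta A-\gl)^{-1}-(A-\gl)^{-1}-\Op\bigl(\psi((a_m+\Delta a_m-\gl)^{-1}-(a_m-\gl)^{-1})\bigr)$ is handled by a single resolvent-identity re-ordering into five terms, each controlled by the Technical Lemma~\ref{mtl} or by Lemmata~\ref{norm-estimate1}--\ref{norm-estimate2}, all with the crucial decay factor $|\gl|^{-\min(1/m,1)}$. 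The paper thus uses only the \emph{one-term} parametrix $\Op(\psi(a_m-\gl)^{-1})$ and never forms a Neumann series, exploiting the exact resolvent $(A-\gl)^{-1}$ (whose $|\gl|^{-1+p/m}$ bounds are available by Seeley) rather than $(I-\mathcal R_N)^{-1}$; this sidesteps the need to show $\|\mathcal R_N(p,\gl)\|<1/2$ uniformly in a $\cT$-neighbourhood, which in your scheme is the hardest point because $\mathcal R_N$ contains patching-induced smoothing errors whose operator norm is not read off from symbol seminorms. Your sketch honestly flags exactly this issue and correctly proposes the remedy --- Schur-type kernel estimates that track $|\gl|$-decay and finitely many $C^k$-seminorms of $p$ simultaneously --- and that remedy is precisely what the paper's Technical Lemma~\ref{mtl} (proved in Appendix~\ref{s:appendixA} via Schur's test on $q_f,q_g$) supplies. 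So the technical heart is the same, and both routes should work, but the paper's one-step, one-term-parametrix organization is leaner: it avoids the Neumann series, avoids the auxiliary operator $q(p)+C_0$ (and the attendant need to verify it stays in $\Ell^m_{\gG_+}$), and avoids the somewhat delicate cancellation you invoke between $\int\gl^{-1}I\,d\gl$ and the non-decaying part of $\int B_N\,d\gl$, which in the paper is absorbed cleanly into the a priori convergent symbol integral defining $\Phi_0(a_m)\in\CL^{-m}$.
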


\subsubsection{The structure of this note}
In Section \ref{s:def} we introduce principally classical symbols
and principally classical pseudo-differential operators
$\pdo_{\SC}^m$ on closed manifolds, define a locally convex topology
on it, and discuss the natural factorization of sectorial
projections. In Section \ref{s:local} we identify the (semi)-norms
we need on $\pdo_{\SC}^m$ to ensure that $P_{\gG_+}$ is continuous.
In Section \ref{s:mtl} we give a technical lemma which is crucial in
the proof of our main theorem. We prove some more estimates which
possibly are of more general interest, as well. In Section
\ref{s:applications} we apply our estimates to the perturbation
problem for sectorial projections and draw some consequences for
index correction formulas and the variation of Cauchy data spaces on
manifolds with boundary. In the Appendix \ref{s:appendixA}, we give
the details of the proof of the technical lemma. In the Appendix
\ref{s:appendixB} we explain the topological obstructions that are
seemingly overlooked in the literature on symbolic calculus.

\section{Definitions and notations}\label{s:def}

To fix the notation, we summarize the basic concepts of symbolic
calculus and introduce a space of 
principally classical pseudo-differential operators on closed
manifolds. Moreover, we fix a locally convex topology on it. For
elliptic principally classical pseudo-differential operators of
positive order and for a fixed contour $\gG_+$ we define the
sectorial projections.

\subsection{Classes of symbols}\label{ss:Symbols}

Let $U\subset \R^n$ be an open subset. We denote by $\sym^m(U\times
\R^n)$, $m\in \R$, the space of (complex valued) symbols (the
generalization for matrix valued symbols is straightforward) of
H\"ormander type $(1,0)$ (\textsc{H\"ormander} \cite{Hor:FIOI},
\textsc{Grigis--Sj{\"o}strand} \cite{GriSjo:MAD}). More precisely,
$\sym^m(U\times \R^n)$ consists of those $a\in \CC^\infty(U\times
\R^n)$ such that for multi--indices $\alpha,\gamma\in \Z_+^n$ and
compact subsets $K\subset U$ we have an estimate
\begin{equation}\label{eq:3.1}
    \bigl|\partial_x^\alpha\partial_\xi^\gamma a(x,\xi)\bigr|
  \le C_{\alpha,\gamma,K} (1+|\xi|)^{m-|\gamma|}, \quad x\in K.
\end{equation}
The best constants in \eqref{eq:3.1} provide a set of semi-norms
which endow $\sym^\infty (U\times
\R^n):=\bigcup_{m\in\R}\sym^m(U\times \R^n)$ with the structure of a
Fr{\'e}chet algebra.

The space $\Csym^m(U\times \R^n)$ of \emph{classical symbols}
consists of all $a\in\sym^m(U\times \R^n)$ that admit sequences
$a_{m-j}\in \cinf{U\times\R^n}, j\in \Z_+$ with
\begin{equation}\label{eq:classical}
   a_{m-j}(x,r\xi)=r^{m-j} a_{m-j}(x,\xi),\quad r\ge 1, |\xi|\ge 1,
\end{equation}
such that
\begin{equation}\label{eq:classical-a}
  a-\sum_{j=0}^{N-1} a_{m-j}\in\sym^{m-N}(U\times \R^n)
\quad  \text{ for all $N\in\Z_+$}.
\end{equation}
The latter property is usually abbreviated
$a\sim\sum\limits_{j=0}^\infty a_{m-j}$.

Homogeneity and smoothness at $0$ contradict each other except for
monomials. Our convention is that symbols should always be smooth
functions, thus the $a_{m-j}$ are smooth everywhere but homogeneous
only in the restricted sense of Eq.\ \eqref{eq:classical}.

Furthermore, we denote by
$\sym^{-\infty}(U\times\R^n):=\bigcap_{a\in\R}\sym^{a}(U\times\R^n)$
the space of \emph{smoothing symbols}.

\subsection{(Classical) pseudo-differential operators}\label{ss:pdos}                

Let $M$ be a smooth manifold of dimension $n$. For convenience and
to have an $L^2$--structure at our disposal, we assume that $M$ is
equipped with a Riemannian metric. We denote by $\pdo^\bullet(M)$
the algebra of {\em pseudo-differential operators} with symbols of
H\"ormander type $(1,0)$ (\cite{Hor:FIOI}, \cite{Shu:POS}), see
Subsection \ref{ss:Symbols}. The subalgebra of {\em classical
pseudo-differential operators} is denoted by $\CL^\bullet(M)$. These
operator algebras are naturally defined on the manifold $M$ by
localizing in coordinate patches in the following way:

Let $U\subset \R^n$ be an open subset. Recall that for a symbol
$a\in\sym^m(U\times\R^n)$, the {\em canonical} pseudo-differential
operator {\em associated} to $a$ is defined by
\begin{equation}\label{eq:psido}%
\begin{split}
 \bigl(\Op(a)\, u\bigr) (x) &:= \int_{\R^n} \, e^{i \langle x,\xi \rangle} \,
                     a(x,\xi) \, \hat{u} (\xi ) \, \dbar \xi \\
                     &= \int_{\R^n}\int_U \, e^{i \langle x-y,\xi \rangle} \,
                       a(x,\xi) \, u(y)  dy \dbar \xi,
\end{split}\qquad \dbar\xi:=(2\pi)^{-n} d\xi.
\end{equation}
For a manifold $M$, elements of $\pdo^\bullet(M)$ (resp.
$\CL^\bullet(M)$) can locally be written as $\Op(\sigma)$ with
$\sigma\in \sym^\bullet(U\times \R^n)$ (resp.
$\CS^\bullet(U\times\R^n)$).

Recall that there is an exact sequence
\begin{equation}\label{e:classical-exact}
0\longrightarrow \CL^{m-1}(M)\longhookrightarrow
\CL^m(M)\xrightarrow{\sigma_m} \Ci(S^*M)\longrightarrow 0\,,
\end{equation}
where $\sigma_m(A)$ denotes the {\em principal} (homogeneous
leading) symbol of $A\in\CL^m(M)$. Here, $S^*M$ denotes the cosphere
bundle, i.e., the unit sphere bundle $\subset T^*M$. As usual, the
principal symbol is locally defined as a map
$\sigma_m:\sym^m(U\times\R^n)\to\Ci(U\times S^{n-1})$ by putting
\begin{equation}\label{e:principal-symbol}
\sigma_m(x,\xi):=\lim_{r\to\infty}r^{-m}a(x,r\xi).
\end{equation}
Note that $\sigma_m(A)$ is a homogeneous function on the symplectic
cone $T^*M\setminus M$. We will tacitly identify the homogeneous
functions on $T^*M\setminus M$ by restriction with $\cinf{S^*M}$.

Recall that the principal symbol map is multiplicative in the sense
that
\begin{equation}\label{eq:SymbolMultiplicative}
 \sigma_{a+b}(A\circ B)=\sigma_a(A)\sigma_b(B)
\end{equation}
for $A\in\CL^a(M), B\in\CL^b(M)$.

\subsection{Principally classical pseudo-differential operators}\label{ss:Semiclassical}
As mentioned in the Introduction,
continuous variation of the operator $A$ by bounded $L^2\to L^2$
perturbation is sufficient to obtain continuous variation of the
Cauchy data space, of the Calder{\'o}n projection and of the
sectorial projection in various cases (see \cite[Theorem
3.8]{BooFur:MIF}, \cite[Proposition 7.13]{BooLesZhu:CPD}). However,
we have a hunch that continuous variation of the operator $A$ in the
operator norm, say from $H^{m}(M)$ to $L^2(M)$ will not always yield
continuous variation of the sectorial projection $P_{\gG_+}(A)$ in
the operator norm from $L^2(M)$ to $L^2(M)$. These are our intuitive
arguments:

We know that general functional analysis does not suffice to obtain
the boundedness of the sectorial projection. The more refined
structure of differential or pseudo-differential operators is
required. Apparently, for variation in the highest order, the
principal symbol must be singled out. All that indicates that
variation in the operator norm hardly will suffice for continuous
variation of the sectorial projection.

We use the following convention which
will be in effect for the rest of this note:
\begin{convent}\label{c:operator-norm}
We denote the norm on the space $\cB(H^s,H^t)$ of bounded operators
from the Sobolev space $H^s(M;E)$ to $H^t(M;E)$ by
$\|\cdot\|_{s,t}$\,.
\end{convent}

Let ${\rm L}^{m-1}(M,E)$ (resp. ${\rm CL}^m(M,E)$) denote the space
of $(m-1)$th order pseudo-differential operators on $E$ (resp. $m$th
order classical pseudo-differential operators). Set ${\rm L}_{{\rm
pc}}^m(M,E):={\rm CL}^m(M,E)+{\rm L}^{m-1}(M,E)$. We call it the
space of {\it principally classical pseudo-differential operators}.
Since ${\rm CL}^m(M,E) \cap {\rm L}^{m-1}(M,E) = {\rm
CL}^{m-1}(M,E)$, the principal symbol map
$$\sigma_m:{\rm L}_{{\rm pc}}^m(M,E)\to C^{\infty}(S^*M;{\rm End}(\pi^*E))$$
is well-defined. Here $\pi:S^*M\to M$ denotes the canonical
projection map. We fix a right inverse
$${\rm Op}:C^{\infty}(S^*M;{\rm End}(\pi^*E))\to {\rm L}_{{\rm pc}}^m(M,E)$$
of $\sigma_m$, obtained by patching together the local {\rm Op}-maps
(\ref{eq:psido}) via a partition of unity. Define a map
\begin{equation}\label{e:splitting1}
\begin{matrix}
C^{\infty}(S^*M;{\rm End}(\pi^*E))& \oplus & {\rm L}^{m-1}(M,E) & \rightarrow & {\rm L}_{{\rm pc}}^m(M,E)\\
a &\oplus & B &\mapsto& {\rm Op}(a)+B.
\end{matrix}
\end{equation}
It is a bijection with the inverse
\begin{equation}\label{e:splitting2}
\begin{matrix}
{\rm L}_{{\rm pc}}^m(M,E)& \rightarrow & C^{\infty}(S^*M;{\rm End}(\pi^*E))& \oplus & {\rm L}^{m-1}(M,E)\\
T &\mapsto& {\rm Op}(\sigma_m(T)) &\oplus& T- {\rm Op}(\sigma_m(T)).
\end{matrix}
\end{equation}
We topologize the right hand side of \eqref{e:splitting2} as follows:
\renewcommand*{\labelenumi}{%
   \arabic{enumi}.}
\begin{enumerate}

\item On $\pdo^{m-1}(M,E)$ we take the countably many semi-norms $\|T\|_{k+m-1,k}\,$ for
$k\in\Z$.

\item The summand  $\Ci(S^*M; \End(\pi^*E))$ is equipped with the $\Ci$-topology.
This is known to be a Fr{\'e}chet-topology, hence is generated by
countably many semi-norms $\bigl(p_j\bigr)_{j\in\Z_+}$\/.

\end{enumerate}

\begin{dfn}\label{d:topology}
The locally convex topology on $\pdo_{\SC}^m(M,E)$ induced by the
countably many semi-norms $\|\cdot\|_{k+m-1,k}\, ,\ k\in\Z$ and
$p_j\/,\ j\in\Z_+$ is denoted by $\cT$.
\end{dfn}

It follows from complex interpolation that for each {\em real} $s$
the (semi)-norm $\|~\cdot~\|_{s+m-1,s}$ is continuous with regard to
$\cT$. Furthermore, it is straightforward to see that $\cT$ is independent of the choice
of $\Op$. Moreover, it is worth noting that $\cT$ is not complete. By construction, the
completion of
 $\pdo_{\SC}^m(M,E)$ is a Fr{\'e}chet space which is of the form
\[
\operatorname{CZ}^{m-1}(M,E) \oplus \Ci(S^*M; \End(\pi^*E)).
\]
Here $\operatorname{CZ}^{\bullet}(M,E)$ is (a variant of) the
well-known Calder{\'o}n-Zygmund graded algebra (cf. \cite[Chapter
16]{Pal:SASIT}).

\begin{rem}\label{r:phi-zero}
We record that a sequence $(T_n)_{n\in\N}\<\pdo_{\SC}^m(M,E)$
converges to $T\in \pdo_{\SC}^m(M,E)$ if and only if
\renewcommand*{\labelenumi}{%
   (\roman{enumi})}
\begin{enumerate}

\item $\sigma_m(T_n)\too \sigma_m(T)$ in the $\Ci$-topology of
 $\Ci(S^*M; \End(\pi^*E))$, and

\item $T_n-\Op(\sigma_m(T_n))\too T-\Op(\sigma_m(T))$ with regard to $\|\cdot\|_{k+m-1,k}$
for all $k\in\Z$\/.
\end{enumerate}
\end{rem}

\subsection{The definition of $P_{\gG_+}(A)$}\label{ss:definition}
We shall give our
definition in some detail. These details will be decisive for
proving the perturbation results.

\subsubsection{Our data}\label{ss:data}
Our data are as in Convention \ref{c:convent}. More specifically, we
shall assume $A\in \pdo_{\SC}^m(M,E)$ and that the principal symbol
$a_m(x,\xi)$ of $A$ has no eigenvalues on the rays $L_{\alpha_j},
j=1,2$ for each point $x\in M$ and covector $\xi\in T^*_xM, \xi\ne
0$. For simplicity, denote the principal symbol $\sigma^m_A(x,\xi)$
by $a_m=a_m(x,\xi)$. Note that every ray of minimal growth has a
cone-shaped neighborhood  $\Lambda$ such that any ray contained in
 $\Lambda$ is also a ray of minimal growth for $A$. Then there exists $R>0$
 such that Conventions \ref{c:convent}.(c)-(d) are satisfied and $A-\lambda$ is
invertible for $\lambda\in  \Lambda,\,|\lambda|>R$. Moreover, we
have
 \begin{equation}\label{-1}
\|(A-\lambda)^{-1}\|_{s,s+p}\leq C|\lambda|^{-1+\frac{p}{m}},\qquad
0\leq p\leq m,\,s\in \mathbb{R},
 \end{equation}
for any such $\lambda$. 
For the proof of \eqref{-1} see \cite[Corollary
1]{See:CPE}. For differential operators see also \cite[Theorem
9.3]{Shu:POS}. Equation \eqref{-1} explains the common usage of
``ray of minimal growth of the resolvent" for such spectral cutting
rays.

\subsubsection{Definition of the sectorial
projection and our goal}\label{ss:sectorial-definition}
 Equation \eqref{-1} explains why we
cannot expect convergence of the integral
$\int_{\Gamma_+}(A-\lambda)^{-1}\,d\lambda$, which is familiar in
the bounded case presented above in \eqref{e:bounded}. The common
way to get something finite is to guarantee convergence of the
integral by inserting a factor $\gl\ii$ and to compensate by
multiplying the integral by $A$.

\begin{dfn}\label{d:sectorial-projection}
For the preceding data, we define
\begin{equation}\label{def}
P_{\gG_+}(A):=\frac {-1}{2\pi i} A\,\Phi(A), \qquad
\Phi(A):=\int_{\Gamma_+}\lambda^{-1}\,(A-\lambda)^{-1}\,d\lambda.
\end{equation}
\end{dfn}

\begin{rem}\label{r:goal} (a) In view of the estimate \eqref{-1},
the composition of $A$ with the integral $\Phi(A)$ \emph{a priori}
gives rise to an unbounded operator on $L^2(M;E)$ with domain
$\cup_{s>0}H^s(M;E)$. The nice fact, however, is that $P_{\gG_+}(A)$
truly is a bounded operator (see Section \ref{ss:operator-type}).

\noi (b) Our goal is to prove that with respect to the topology $\cT$
\begin{equation}\label{e:main-mapping}
P_{\gG_+}: \Ell^m_{\gG_+}(M,E)\too \cB(H^s(M;E)) \quad\text{ is
continuous for all $s\in\R$}.
\end{equation}
Here we keep the rays $L_{\ga_j}, j=1,2$ and the contour $\gG_+$
fixed and set
\begin{multline}\label{e:ell-gamma-plus}
\Ell_{\gG_+}^m(M,E):=\{A\in \pdo^m_{\SC}(M,E)\mid A \text{ elliptic,
}
\spec A\cap \gG_+=\emptyset\\
\text{ and $L_{\ga_j}, j=1,2$ rays of minimal growth}\}.
\end{multline}

\noi (c) As a side result, we shall show under what conditions
$P_{\gG_+}(A)$ becomes a pseudo-differential operator. We consider
that of minor importance. The proof of (b) will anyway show that
$P_{\gG_+}(A)$ is of the form $P_{\gG_+,0}(A)+K$ with
$P_{\gG_+,0}(A)\in \pdo_{\SC}^0(M,E)$ and $K$ a compact operator.
\end{rem}

\subsection{First reduction}\label{ss:first-reduction}
The factorization of $P_{\gG_+}(A)=\frac {-1}{2\pi i} A\Phi(A)$ in
Equation \eqref{def} of Definition \ref{d:sectorial-projection}
permits a first reduction of our problem.

\begin{lemma}\label{l:first-reduction}
Suppose that the map
\[
\Phi:\ \Ell_{\gG_+}^m(M,E)\,\ni\, A\ \mapsto\ \int_{\gG_+}\gl\ii\,
(A-\gl)\ii\,d\gl\, \in\, \cB(H^s,H^{s+m})
\]
is continuous and that $\|\cdot\|_{s, s+m}$ is a continuous
(semi-)norm with respect to $\cT$. Then our claim
\eqref{e:main-mapping} holds.
\end{lemma}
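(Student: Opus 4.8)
The plan is to exploit the factorization $P_{\gG_+}(A)=\frac{-1}{2\pi i}\,A\,\Phi(A)$ from Definition \ref{d:sectorial-projection}, reading it as a composition of bounded maps
\[
H^s(M;E)\ \xrightarrow{\,\Phi(A)\,}\ H^{s+m}(M;E)\ \xrightarrow{\,A\,}\ H^s(M;E),
\]
and to deduce continuity of $P_{\gG_+}$ from continuity of the two factors together with the (locally uniform) joint continuity of composition of bounded operators. The left factor is exactly the first hypothesis: $A\mapsto\Phi(A)\in\cB(H^s,H^{s+m})$ is assumed continuous, and since continuity into a normed space entails local boundedness, $\|\Phi(A')\|_{s,s+m}$ stays bounded on a $\cT$-neighbourhood of any given $A$. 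For the right factor, every $A\in\pdo^m_{\SC}(M,E)$ is bounded $H^{s+m}(M;E)\to H^s(M;E)$, and I would invoke the second hypothesis, that $\|\cdot\|_{s+m,s}$ is a $\cT$-continuous seminorm, to conclude that $A\mapsto A\in\cB(H^{s+m},H^s)$ is continuous. (Should one prefer to avoid assuming this: split $A=\Op(\sigma_m(A))+R_A$, $R_A:=A-\Op(\sigma_m(A))\in\pdo^{m-1}(M,E)$; the first summand is bounded $H^{s+m}\to H^s$ with norm dominated by finitely many $\Ci(S^*M)$-seminorms of $\sigma_m(A)$ by the standard mapping property of pseudo-differential operators, while $\|R_A\|_{s+m,s}\le C\,\|R_A\|_{s+m-1,s}$ by the embedding $H^{s+m}\hookrightarrow H^{s+m-1}$, and $\|\cdot\|_{s+m-1,s}$ is $\cT$-continuous by the interpolation remark after Definition \ref{d:topology}.)

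Granting this, I would fix $s\in\R$ and $A\in\Ell^m_{\gG_+}(M,E)$ and, for a competing $A'\in\Ell^m_{\gG_+}(M,E)$, start from the elementary identity
\[
A'\Phi(A')-A\Phi(A)=(A'-A)\,\Phi(A')+A\,\bigl(\Phi(A')-\Phi(A)\bigr),
\]
which, by sub-multiplicativity of operator norms, yields
\[
\|P_{\gG_+}(A')-P_{\gG_+}(A)\|_{s,s}\ \le\ \tfrac{1}{2\pi}\Bigl(\|A'-A\|_{s+m,s}\,\|\Phi(A')\|_{s,s+m}+\|A\|_{s+m,s}\,\|\Phi(A')-\Phi(A)\|_{s,s+m}\Bigr).
\]
Letting $A'\to A$ in $\cT$, the first hypothesis forces $\|\Phi(A')-\Phi(A)\|_{s,s+m}\to0$ and keeps $\|\Phi(A')\|_{s,s+m}$ bounded near $A$, the second forces $\|A'-A\|_{s+m,s}\to0$, and $\|A\|_{s+m,s}$ is a fixed finite number; hence the right-hand side tends to $0$. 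This gives $P_{\gG_+}(A')\to P_{\gG_+}(A)$ in $\cB(H^s(M;E))$, and as $s\in\R$ was arbitrary, the assertion \eqref{e:main-mapping} follows. (Note that the hypotheses also make $P_{\gG_+}(A)\in\cB(H^s(M;E))$ automatic, being the composition $A\circ\Phi(A)$.)

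I do not expect a genuine obstacle inside this reduction: it is a soft argument resting only on bilinearity and local joint continuity of composition of bounded operators, once the two factors are identified. The whole weight of the problem is displaced into the hypotheses, and verifying them — in particular that $A\mapsto\int_{\gG_+}\lambda^{-1}(A-\lambda)^{-1}\,d\lambda$ is continuous from $(\Ell^m_{\gG_+}(M,E),\cT)$ into $\cB(H^s,H^{s+m})$, which forces one to control the resolvent estimate \eqref{-1} \emph{uniformly} in $A$ and to cope with the borderline exponent $p=m$ in the $\lambda$-integration — is the business of the subsequent sections. The only points demanding a little attention in the present lemma are (i) that it is $\|\cdot\|_{s+m,s}$, not merely $\|\cdot\|_{s+m-1,s}$, that must be $\cT$-continuous, so the principal part $\Op(\sigma_m(A))$ has to be treated on its own, and (ii) that one genuinely uses local boundedness of $\Phi$, not just its continuity, to dispose of the factor $\|\Phi(A')\|_{s,s+m}$.
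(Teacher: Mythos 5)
Your proof is correct and takes essentially the same route as the paper: both rest on the bilinear difference decomposition of $A\mapsto A\Phi(A)$ together with submultiplicativity of operator norms, the only cosmetic difference being that you split off $(A'-A)\Phi(A')+A(\Phi(A')-\Phi(A))$ whereas the paper writes $(A-B)\Phi(A)+B(\Phi(A)-\Phi(B))$, so you use local boundedness of $\|\Phi(\cdot)\|_{s,s+m}$ where the paper uses local boundedness of $\|\cdot\|_{s+m,s}$ — equivalent under the stated hypotheses. You also correctly read the lemma's seminorm hypothesis as $\|\cdot\|_{s+m,s}$ (a small typo in the statement), consistent with the paper's own proof.
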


\begin{proof}
Given $A\in\Ell_{\gG_+}^m(M,E)$. Then there is a neighborhood $U$ of
$A$ such that $\|\cdot\|_{s+m,s}$ is bounded on $U$. Hence we reach
the conclusion from
\begin{multline*}
\|P_{\gG_+}(A)- P_{\gG_+}(B)\|_{s,s}\\
\le \|A- B\|_{s+m,s}\, \|\Phi(A)\|_{s,s+m}
+\|B\|_{s+m,s}\,\|\Phi(A)- \Phi(B)\|_{s,s+m}\,.\qedhere
\end{multline*}
\end{proof}

This Lemma reduces the problem to the task of considering
$\int\gl\ii\, (A-\gl)\ii\,d\gl$, which is more convenient.


\section{Local considerations}\label{s:local}
In our Definition \ref{d:topology}, we specified  our topology
$\cT$, see also Remark \ref{r:goal}(b). Now we shall successively
identify the corresponding (semi-)norms on $\pdo_{\SC}^m$ which
ensure that $P_{\gG_+}$ is continuous.

\subsection{Cut-off symbols}\label{ss:cut-off}
In the Appendix, we explain why we cannot deform and extend 
the symbol $a_m(x,\xi)$ in a suitable way.
However, we can easily
deform and extend $\bigl(a_m(x,\xi)-\gl\bigr)\ii$ in the usual way
as a {\em smoothed resolvent symbol} \cite[Sections 11.3-11.4]{Shu:POS} (similarly,
e.g., {\sc Bilyj, Schrohe}, and {\sc Seiler} in the recent
\cite[Definition 2.5]{BilSchSei:CHP}).
Recall that we denote the principal symbol of $A$ by $a_m(x,\xi)$
and that we have assumed that
\begin{equation}\label{e:ray-assumption}
\spec a_m(x,\xi)\cap L_{\ga_j}=\emptyset \qquad\text{ for
$(x,\xi)\in T^*M, \xi\neq 0, j=1,2$}.
\end{equation}
Thus, there is a constant $\rho>0$ such that $a_m(x,\xi)-\gl$ is
invertible for $(x,\xi)\in T^*M, |\xi|\ge \rho$ and $\gl\in\gG_+$\,.
Hence, for any cut-off function $\psi\in\Ci(\R^n)$ with
\begin{equation}\label{e:cut-off}
\psi(\xi)=\begin{cases} 0, & \text{ for $|\xi|\le\rho$},\\
1, & \text{ for $|\xi|\gg 0$},
\end{cases}
\end{equation}
(that is, the function $1-\psi$ is compactly supported) and for each
$\gl\in\gG_+$ the symbol
\begin{equation}\label{e:cut-off-symbol}
(x,\xi)\mapsto \psi(\xi)(a_m(x,\xi)-\gl)\ii
\end{equation}
is a classical symbol of order $-m$.

\subsection{Symbol estimates and semi-norms}\label{ss:symbol-estimates}
From now on we shall switch forward and backward between arguing
locally (in the open domain $U\< \R^n$) and globally (on $M$). With
the preceding symbol $a_m$ and cut-off function $\psi$, we shall
write
\begin{equation}\label{e:lambda-dependent-symbol}
r^\psi(x,\xi,\gl):=\psi(\xi)(a_m(x,\xi)-\gl)\ii\,.
\end{equation}
For fixed $\gl$ we have $r^\psi(\cdot,\cdot,\gl)\in
\CS^{-m}(U\times\R^n,E)$. Considered as a $\gl$-dependent symbol, it
does not necessarily belong to the usual parameter dependent
calculus. Actually, the cut-off $\psi$ prevents this.

However, we have the following symbol estimates, which are uniform
in $\gl\in\gG_+$\/:
\begin{equation}\label{e:parameter-symbol-estimeates}
\begin{split}
\bigl|\partial_x^{\ga}
\partial_\xi^{\gb}\,&r^\psi(x,\xi,\gl)\bigr|\\
&\le \begin{cases} C_{0,0}(1+ |\xi| + |\gl|^{1/m})^{-m}\,, &
\ga=\gb=0,\\
C_{\ga,\gb}(1+ |\xi|)^{m-|\gb|}\, (1 + |\xi| + |\gl|^{1/m})^{-2m}\,,
& (\ga,\gb)\neq (0,0),
\end{cases}\\
&\le C_{\ga,\gb}(1+ |\xi|)^{-|\gb|}\, (1 + |\xi| +
|\gl|^{1/m})^{-m}\,.
\end{split}
\end{equation}
The proof is an exercise in induction and Leibniz rule.

What is important is that the best constants $C_{\ga,\gb}$ in
\eqref{e:parameter-symbol-estimeates}, as functions of $a_m$\,, are
continuous semi-norms on the space $C^k(S^*M;\End(\pi^*E))$ of sections
with $k:=|\ga| + |\gb|$. In particular, they are
continuous semi-norms on the space $\Ci(S^*M;\End(\pi^*E))$.

As a consequence, we have the following: for fixed $k$
and fixed symbol $a_m\in\Ci(S^*M;\End(\pi^*E))$ there is an open
neighborhood $U$ of $a_m$ such that $C_{\ga,\gb}, |\ga| + |\gb|\le
k$, are bounded on $U$ and such that each $b_m\in U$ is
``invertible" on $\gG_+$\,, that is, it satisfies the same
$\Ell_{\gG_+}$--conditions as $a_m$\,.

{\em Note}. We have to fix $k$ and cannot bound infinitely many
semi-norms simultaneously: the intersection of infinitely many open
$U_{\ga,\gb}$ might be non-open.

\subsection{A first approximation} 
The symbolic calculus yields the following first approximation
result.

\begin{prop}\label{p:first-approximation}
{\rm (a)} For $a_m$ and $r^\psi$ as above, the operator
\begin{equation}\label{e:first-approximation}
\Phi_0(a_m) := \int_{\gG_+}\gl\ii\,\Op\bigl(r^\psi(\cdot,\cdot,\gl)\bigr)\,d\gl
\end{equation}
belongs to the class $\CL^{-m}(U,E)$.

\noi {\rm (b)} $\Phi_0 \circ \sigma_m: \Ell^m_{\gG_+}(M,E) \to
\cB(H^s,H^{s+m})$ is continuous with regard to $\cT$.
\end{prop}

\begin{proof} For (a) we see that
\[
\psi(\xi)\int_{\gG_+}\gl\ii\, (a_m(x,\xi)-\gl)\ii\, d\gl =
\int_{\gG_+}\gl\ii\, r^\psi(x,\xi,\gl)\, d\gl \] is homogeneous of
degree $-m$ outside a compact set, and smooth otherwise. Recall that
principal symbols are determined by their values in $\{(x,\xi)\in
T^*M\mid |\xi|\ge C\}$ where $C$ is any positive constant. That
proves (a).

\noi For (b) we have that in the topology $\mathcal T$,
$$\sigma_m:{\rm L}^m_{{\rm pc}}(M,E)\to C^{\infty}(S^*M, {\rm
End}(\pi^*E))$$ is continuous. We denote the space of symbols
analogue of the operator space $\Ell_{\gG_+}^m$ by
$\Ci_{\gG_+}(S^*M, \End(\pi^*E))$. Certainly,
\[
\begin{matrix}
\Ci_{\gG_+}(S^*M, \End(\pi^*E)) & \too & \CS^{-m}(T^*M, \End(\pi^*E))\\
a_m & \longmapsto & \int_{\gG_+}\gl\ii\,
\psi(\xi)\,(a_m(x,\xi)-\gl)\ii\, d\gl
\end{matrix}
\]
and
\[
\Op: \, \CS^{-m}(T^*M, \End(\pi^*E)) \too \cB(H^s,H^{s+m})
\]
are continuous. That proves (b).
\end{proof}

\section{A technical lemma and key estimates}\label{s:mtl}

\subsection{A technical lemma}
In this subsection, we shall give a technical lemma which is crucial
in the proof of our main theorem. It is a variant form (with a
parameter) of the composition of pseudo-differential operators. As a
service to the reader, we give a detailed proof of this lemma in the
Appendix \ref{s:appendixA}. Our claims and arguments are local for a
fixed open coordinate patch $U\subset \R^n$\/.

\begin{dfn}\label{d:symbol-K}
For a compact subset $K\<U$ we denote by $\sym_K^m(U\times
\R^n)\<\sym^m(U\times\R^n)$ those $a\in \sym^m(U\times\R^n)$ such
that $a(x,\xi)\ne 0$ implies $x\in K$. $\Csym^m_K(U\times \R^n)$ is
defined accordingly. A typical example is
$a(x,\xi)=\theta(x)b(x,\xi)$ with $b\in \sym^m(U\times\R^n)$ and a
cut-off function $\theta\in C_c^{\infty}(U)$.
  \end{dfn}

Clearly, the preceding definitions carry over to matrix valued
symbols and to globally defined symbols with values in bundle
endomorphisms.

\begin{lemma}[Technical Lemma]\label{mtl}
Let $m>0,\,0\leq r\leq m$. Let $f,g\in\Ci(U\times\R^n\times\gG_+)$
such that for $\gl\in\gG_+$
$$f(\cdot,\cdot,\gl)\in \sym^r_K(U\times\R^n),\qquad
g(\cdot,\cdot,\gl)\in \sym^{-m}_K(U\times\R^n).$$ Assume that
\begin{multline}\label{f:e}
|\partial_x^{\alpha}\partial_{\xi}^{\beta}f(x,\xi,\lambda)|\\
\leq\begin{cases}C_{0,0}(f)(1+|\xi|+|\lambda|^{1/m})^{r}, &\alpha=\beta=0,\\
C_{\alpha,\beta}(f)(1+|\xi|)^{m-|\beta|}(1+|\xi|+|\lambda|^{1/m})^{r-m},
& |\alpha|+|\beta|>0,\end{cases}
\end{multline}
and
\begin{equation}\label{g:e}
|\partial_x^{\alpha}\partial_{\xi}^{\beta}g(x,\xi,\lambda)|
\leq \wt C_{\alpha,\beta}(g)(1+|\xi|)^{-|\beta|}(1+|\xi|+|\lambda|^{1/m})^{-m}\/,
\end{equation}
where $C_{\cdot,\cdot}(\cdot), \wt C_{\cdot,\cdot}(\cdot)$ are constants depending on certain datas in the dots' positions. Set $C_N(f)=\sum_{|\alpha|,|\beta|\leq N}C_{\alpha,\beta}(f)$ and
$\wt C_N(g)=\sum_{|\alpha|,|\beta|\leq N}\wt C_{\alpha,\beta}(g)$.
Then for $s\in \R$, there is an $N(s)\in \N$ and $C>0$ such that
\begin{multline*}
\left\|{\rm Op}(g(\cdot,\cdot,\lambda)){\rm Op}(f(\cdot,\cdot,\lambda))-{\rm Op}(g f(\cdot,\cdot,\lambda))\right\|_{s,s+m-r}\\
\leq C C_{N(s)}(f)\wt C_{N(s)}(g)|\lambda|^{-\min(\frac{1}{m},1)}.
\end{multline*}
\end{lemma}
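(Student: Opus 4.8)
The plan is to reduce the claimed operator-norm estimate to a symbol estimate for the remainder term in the composition formula, and then to exploit the explicit $\lambda$-dependent decay built into the hypotheses \eqref{f:e} and \eqref{g:e}. First I would recall that for $F=\Op(f(\cdot,\cdot,\lambda))$ and $G=\Op(g(\cdot,\cdot,\lambda))$ the composition $G\circ F$ is $\Op(c(\cdot,\cdot,\lambda))$ with $c\sim \sum_{\gamma}\tfrac{1}{\gamma!}\partial_\xi^\gamma g\, D_x^\gamma f$, so that the remainder
\[
R(\lambda):=\Op(g(\cdot,\cdot,\lambda))\Op(f(\cdot,\cdot,\lambda))-\Op\bigl((gf)(\cdot,\cdot,\lambda)\bigr)
\]
has symbol $c(\cdot,\cdot,\lambda)-g(\cdot,\cdot,\lambda)f(\cdot,\cdot,\lambda)$, which by the standard integral form of the remainder in the symbolic calculus (oscillatory integral, or exact Taylor expansion with the $N$-th order remainder) is a finite sum of terms involving $\partial_\xi^\gamma g$ and $D_x^\gamma f$ for $|\gamma|\ge 1$, plus an integral remainder of the same type of order $-N$ for any prescribed $N$. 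Each such term involves \emph{at least one} $\xi$-derivative of $g$ and at least one $x$-derivative of $f$; since $K$ is compact this produces no loss from the $x$-variables.

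Next I would carry out the symbol bookkeeping. Plugging the hypotheses into a typical term $\partial_\xi^\gamma g\cdot D_x^\gamma f$ with $|\gamma|=1$: the factor $\partial_\xi^\gamma g$ is bounded by $\wt C_N(g)(1+|\xi|)^{-1}(1+|\xi|+|\lambda|^{1/m})^{-m}$ while $D_x^\gamma f$ falls under the $|\alpha|+|\beta|>0$ branch of \eqref{f:e}, giving $C_N(f)(1+|\xi|)^{m}(1+|\xi|+|\lambda|^{1/m})^{r-m}$. The product is thus controlled by
\[
C_N(f)\wt C_N(g)\,(1+|\xi|)^{m-1}(1+|\xi|+|\lambda|^{1/m})^{r-2m}.
\]
Now I want to extract a power of $|\lambda|$. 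Using the elementary inequality $(1+|\xi|+|\lambda|^{1/m})^{-t}\le |\lambda|^{-t'/m}(1+|\xi|+|\lambda|^{1/m})^{-(t-t')}$ for $0\le t'\le t$, I split off $|\lambda|^{-1/m}$ from the factor $(1+|\xi|+|\lambda|^{1/m})^{r-2m}$ (legitimate since $2m-r\ge m\ge 1$), leaving a symbol bounded by $C_N(f)\wt C_N(g)|\lambda|^{-1/m}(1+|\xi|)^{m-1}(1+|\xi|+|\lambda|^{1/m})^{r-2m+1}$, which — dropping the remaining $|\lambda|$-gain — is a symbol of order $\le m-1-(2m-r-1)=r-m$ in $\xi$, uniformly in $\lambda$; more carefully one keeps it of order exactly $(m-1)+(r-2m+1)=r-m$. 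Thus each leading remainder term is $|\lambda|^{-1/m}$ times a symbol of order $r-m$ whose seminorms are bounded by $C_N(f)\wt C_N(g)$ with $N$ depending only on how many seminorms of a symbol of order $r-m$ one needs; higher-order terms in $\gamma$ and the order-$(-N)$ integral remainder give the same or better, with an $O(|\lambda|^{-1})$ tail (this is where the $\min(1/m,1)$ in the statement comes from, the two competing powers being $|\lambda|^{-1/m}$ from the leading term and $|\lambda|^{-1}$ absorbing the rest). Finally, $\Op$ maps symbols of order $r-m$ boundedly from $H^s$ to $H^{s+(m-r)}$ with norm dominated by finitely many symbol seminorms (continuity of $\Op$ on fixed-order symbol spaces), so
\[
\|R(\lambda)\|_{s,s+m-r}\le C\,C_{N(s)}(f)\,\wt C_{N(s)}(g)\,|\lambda|^{-\min(1/m,1)},
\]
with $N(s)$ the number of symbol seminorms of a symbol of order $r-m$ needed for $H^s\to H^{s+m-r}$ boundedness.

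The main obstacle, and the reason this is deferred to Appendix \ref{s:appendixA}, is making the remainder estimate \emph{uniform in the parameter} $\lambda$: the usual composition theorem controls seminorms of the remainder symbol in terms of seminorms of $f$ and $g$, but here one must track exactly how the constants $C_{\alpha,\beta}(f),\wt C_{\alpha,\beta}(g)$ and the $\lambda$-weight $(1+|\xi|+|\lambda|^{1/m})$ propagate through the oscillatory-integral representation of the remainder, in particular that the repeated integrations by parts used to prove convergence of $\int e^{-iy\eta}\,(\text{remainder kernel})\,dy\,d\eta$ do not destroy the $|\lambda|^{1/m}$ gain. One has to choose the number of integrations by parts (hence $N$, hence $N(s)$) large enough relative to $s$, and verify that the Peetre-inequality manipulations $(1+|\xi-\eta|)^{|r-m|}$ etc. interact correctly with the parameter weight. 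All of this is routine in spirit but notation-heavy, which is precisely why it is isolated as the Technical Lemma and proved separately.
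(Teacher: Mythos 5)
Your proposal takes a genuinely different route from the paper. You propose to invoke the standard composition asymptotic $c\sim\sum_\gamma\tfrac1{\gamma!}\partial_\xi^\gamma g\,D_x^\gamma f$, estimate each Taylor term plus an integral remainder, and then appeal to $H^s$-boundedness of $\Op$ on fixed-order symbol classes. The paper's proof avoids the composition theorem entirely: it Fourier-conjugates the operator and writes the Schwartz kernel of $\cF\bigl(\Op(f)\Op(g)-\Op(fg)\bigr)\cF^{-1}$ explicitly as $\int\bigl\{q_f(\eta-\tau,\eta)-q_f(\eta-\tau,\xi)\bigr\}q_g(\xi-\eta,\xi)\,d\eta$ with $q_a$ a partial Fourier transform of $a$ in $x$. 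The order drop then comes from a \emph{single} first-order mean value theorem applied to the bracket (giving one $\xi$-derivative of $f$ and the factor $|\xi-\eta|$), while decay in $\eta$ comes from integration by parts in the $q_a$ integrals (giving $x$-derivatives controlled by the $C_{\alpha,\beta}$). The kernel is then bounded via Peetre's inequality with a case split $|\eta-\xi|\lessgtr\tfrac12|\xi|$, and the operator norm comes from Schur's test. The two case branches are exactly what produces the $\min(1/m,1)$ and the constant $C_{N(s)}(f)\,\wt C_{N(s)}(g)$.

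What the paper's route buys: you never have to manipulate the full asymptotic sum or control the $N$-th Taylor integral remainder uniformly in $\lambda$, and the $|\lambda|$-decay is read off the kernel directly rather than filtered through symbol seminorms of a remainder symbol. Your route, if pushed to completion, would essentially re-derive a composition theorem with explicit seminorm dependence; this is not a shortcut, since the proof of the $H^s$-boundedness of $\Op$ on symbol spaces (which you cite as a black box) is itself a Schur-type kernel estimate of the same flavor as the paper's.

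One concrete slip in your bookkeeping: the assertion that extracting $|\lambda|^{-1/m}$ from $(1+|\xi|+|\lambda|^{1/m})^{r-2m}$ is ``legitimate since $2m-r\ge m\ge 1$'' uses $m\ge 1$, which is not assumed — the Lemma only requires $m>0$. When $m<1$ you must instead extract $|\lambda|^{-1}$ (take $t'=m$ in your inequality), and the leftover symbol is then of order $r-1$ rather than $r-m$; since $r-1>r-m$ in this regime, you need the extra observation that $\cB(H^s,H^{s+1-r})\hookrightarrow\cB(H^s,H^{s+m-r})$ continuously to close the argument. You gesture at the two competing powers at the end, but the actual dichotomy in the paper is on the sign of $2m-r-1$ in the kernel estimate, not on a comparison of two sources of decay in the asymptotic sum.
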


\begin{rem}
We should notice that $C_N(\cdot)$ and $\wt{C}_N(\cdot)$ are
semi-norms if we choose the smallest constants
$C_{\alpha,\beta}(\cdot)$ and $\wt C_{\alpha,\beta}(\cdot)$ in
(\ref{f:e}), (\ref{g:e}). Moreover, $C_N(f)$ and $\wt{C}_N(g)$ are
dominated by the finitely many constants $C_{\alpha,\beta}(f)$ and
$\wt C_{\alpha,\beta}(g)$, $|\alpha|,\,|\beta|\leq N$, respectively.
\end{rem}

Now we give some additional examples.
\begin{exmpl}
 $g(x,\xi,\lambda):=\psi(\xi)(a_m(x,\xi)-\lambda)\ii$ satisfies (\ref{g:e}). See (\ref{e:parameter-symbol-estimeates}).
\end{exmpl}
\begin{exmpl}
$f(x,\xi,\lambda):=a(x,\xi)-\lambda$ satisfies (\ref{f:e}) with
$r=m$. If $b\in \Csym_K^m(U\times \R^n)$ is a symbol of order $m$,
then $f(x,\xi,\lambda):=\psi(\xi)(a_m(x,\xi)-\lambda)\ii
b(x,\xi)=r^{\psi}(x,\xi)b(x,\xi)$ also satisfies (\ref{f:e}) with
$r=0$. Note that in this case $$\sum_{|\alpha|,|\beta|\leq
N}C_{\alpha,\beta}(f)\leq \left(\sum_{|\alpha|,|\beta|\leq
N}C_{\alpha,\beta}(r^{\psi})\right)\left(\sum_{|\alpha|,|\beta|\leq
N}C_{\alpha,\beta}(b)\right).$$ Here $C_{\alpha,\beta}(b)$ denotes
the best constant in the symbol estimate for
$\partial_x^{\alpha}\partial_{\xi}^{\beta}b(x,\xi)$ and
$C_{\alpha,\beta}(r^{\psi})$ is of similar meaning.
\end{exmpl}

\begin{rem}
Note that in the examples above, $C_{\alpha,\beta}(f)$ and
$C_{\alpha,\beta}(g)$ are bounded by a $C^k$-norm on $a_m$ (and
$b_m$ in the preceding example) for sufficiently large $k$.
\end{rem}


\subsection{Key estimates}\label{s:key-estimates} Before proving the
main result of this note, we give some more estimates.
\begin{lemma}\label{norm-estimate1}
Given $A\in {\rm Ell}^{m}_{\Gamma_+}(M,E)$. Then for $s\in
\mathbb{R}$, $0\leq p\leq m$, and all $\lambda\in \Gamma_+$ we have
\begin{equation}
\|{\rm Op}(\psi(a_m-\lambda)^{-1})\|_{s,s+p}\leq
C_s(A)|\lambda|^{-1+\frac{p}{m}}.
\end{equation}
Furthermore, to $s$ there is $N_s\in\N$ such that $C_s(A)$ is bounded by
the $C^{N_s}$-norm of $a_m$ on $S^*M$.
\end{lemma}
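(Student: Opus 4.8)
The statement is essentially a parameter-dependent, uniform version of the standard Sobolev mapping estimate for a pseudo-differential operator of order $-p$, applied to the cut-off resolvent symbol $r^\psi(x,\xi,\lambda)=\psi(\xi)(a_m(x,\xi)-\lambda)^{-1}$. The key input we already have is the uniform symbol estimate \eqref{e:parameter-symbol-estimeates}, which says precisely that for every multi-index $(\alpha,\beta)$,
\[
\bigl|\partial_x^\alpha\partial_\xi^\beta r^\psi(x,\xi,\lambda)\bigr|\le C_{\alpha,\beta}(1+|\xi|)^{-|\beta|}\,(1+|\xi|+|\lambda|^{1/m})^{-m},
\]
with $C_{\alpha,\beta}$ a continuous semi-norm on $C^{|\alpha|+|\beta|}(S^*M;\End(\pi^*E))$. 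So the proof splits into two pieces: first, isolate the right quantitative norm estimate for $\Op$ of a symbol obeying such a bound; second, track the dependence of the constant on $a_m$.

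\textbf{Step 1: reduce to $p=0$ and $p=m$ by interpolation, and to the scalar model estimate.} Since $r^\psi(\cdot,\cdot,\lambda)\in\CS^{-m}(U\times\R^n)$ uniformly, $\Op(r^\psi)$ maps $H^s\to H^{s+m}$ with a norm bounded, via the standard continuity of $\Op:\CS^{-m}\to\cB(H^s,H^{s+m})$ invoked already in Proposition \ref{p:first-approximation}, by finitely many of the semi-norms $C_{\alpha,\beta}$. Feeding in the decay factor $(1+|\xi|+|\lambda|^{1/m})^{-m}\le |\lambda|^{-1+p/m}(1+|\xi|)^{-p}$ for $0\le p\le m$ (which is the elementary pointwise inequality $(1+|\xi|+t)^{-m}\le t^{-(m-p)}(1+|\xi|)^{-p}$ with $t=|\lambda|^{1/m}$, valid since $m-p\ge 0$ and $p\ge 0$), one sees that $\Op(r^\psi)$ actually maps $H^s\to H^{s+p}$ with norm $\lesssim |\lambda|^{-1+p/m}$ times finitely many of the $C_{\alpha,\beta}$. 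Concretely: write $r^\psi=\bigl(|\lambda|^{-1+p/m}\bigr)\cdot \tilde r$ where $\tilde r(x,\xi,\lambda):=|\lambda|^{1-p/m}r^\psi(x,\xi,\lambda)$; the estimate above shows $\tilde r$ is bounded in $\CS^{-p}(U\times\R^n)$ uniformly in $\lambda\in\gG_+$, so $\Op(\tilde r)$ is uniformly bounded $H^s\to H^{s+p}$, and multiplying back by $|\lambda|^{-1+p/m}$ gives the claim. Patch the local charts together via the fixed partition of unity defining $\Op$ to pass from $U$ to $M$.

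\textbf{Step 2: dependence of $C_s(A)$ on $a_m$.} Here one must recall that the continuity constant in $\Op:\CS^{-p}(U\times\R^n)\to\cB(H^s,H^{s+p})$ — the precise form of the Calder\'on–Vaillancourt / Sobolev boundedness estimate — is dominated by a \emph{finite} sum $\sum_{|\alpha|+|\beta|\le N_s} C_{\alpha,\beta}$ of the symbol semi-norms, with $N_s$ depending only on $s$ (and $n$). Since each $C_{\alpha,\beta}$ appearing in \eqref{e:parameter-symbol-estimeates} is, as noted there, a continuous semi-norm on $C^{|\alpha|+|\beta|}(S^*M;\End(\pi^*E))$, the resulting bound on $\Op(\tilde r)$ is dominated by the $C^{N_s}$-norm of $a_m$ on $S^*M$; absorbing the cut-off function $\psi$ (fixed once and for all) and the partition-of-unity data into the absolute constant $C$ finishes the proof.

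\textbf{Main obstacle.} The only genuinely delicate point is Step 2: one needs the \emph{quantitative} version of $L^2$/Sobolev boundedness of pseudo-differential operators — that a fixed finite number $N_s$ of symbol derivatives controls the operator norm — together with the observation that, thanks to the homogeneity of $a_m$, the symbol semi-norms $C_{\alpha,\beta}$ of $r^\psi$ are in turn controlled by a $C^{N_s}$-norm of $a_m$ on the compact manifold $S^*M$. Both facts are standard (the former is Calder\'on–Vaillancourt, the latter follows from the Leibniz-rule induction already alluded to after \eqref{e:parameter-symbol-estimeates}), but it is important that $N_s$ does not blow up — that is exactly why the estimate is uniform over $\lambda\in\gG_+$ and why $C_s(A)$ can be taken locally bounded in $A$ with respect to $\cT$.
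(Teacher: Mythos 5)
Your argument is correct and follows essentially the same route the paper indicates: use the uniform symbol estimates \eqref{e:parameter-symbol-estimeates} for $r^\psi$, factor out the $|\lambda|^{-1+p/m}$ decay via the elementary inequality $(1+|\xi|+t)^{-m}\le t^{-(m-p)}(1+|\xi|)^{-p}$, and then invoke the quantitative Sobolev-boundedness of $\Op$ (Seeley's Lemma 2 / Calder\'on--Vaillancourt, equivalently the Schur-test machinery from the Technical Lemma), keeping track that the operator norm is controlled by finitely many symbol semi-norms and hence by a $C^{N_s}$-norm of $a_m$ on $S^*M$. One minor remark: the heading of your Step~1 announces a reduction to $p=0,m$ by interpolation, but the body proves the estimate directly for all $0\le p\le m$ without interpolation; the heading is a harmless leftover.
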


In other words, to $A$ there is an open neighborhood $U$ of $a_m$
(in the $C^{N_s}$-topology) such that the map $B\mapsto C_s(B)$ is
bounded on the open set $\sigma^{-1}_m(U)$.

\begin{proof} Use the standard method of estimating norms of
pseudo-differential operators as in {\sc Seeley} \cite[Lemma
2]{See:CPE}. Of course it also follows from the method presented in
the preceding section.
\end{proof}

\begin{lemma}\label{norm-estimate2}
Given $A\in {\rm Ell}^{m}_{\Gamma_+}(M,E)$. Then for $s\in
\mathbb{R}$ and all $\lambda\in \Gamma_+$
\begin{equation}
\|{\rm Op}(\psi(a_m-\lambda)^{-1})-(A-\lambda)^{-1}\|_{s,s+m}\leq
C_s(A)|\lambda|^{-\min(\frac{1}{m},1)}.
\end{equation}
$C_s(A)$ has the same property as in \textup{Lemma \ref{norm-estimate1}}.
\end{lemma}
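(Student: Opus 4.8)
The plan is to compare the approximate resolvent $\Op(\psi(a_m-\lambda)^{-1})$ with the true resolvent $(A-\lambda)^{-1}$ by inserting the exact resolvent identity. Write $R(\lambda):=\Op(\psi(a_m-\lambda)^{-1})$ and $G(\lambda):=(A-\lambda)^{-1}$. Then
\begin{equation*}
R(\lambda)-G(\lambda)=G(\lambda)\bigl((A-\lambda)R(\lambda)-I\bigr)=G(\lambda)\bigl((A-\lambda)R(\lambda)-I\bigr).
\end{equation*}
Here $(A-\lambda)R(\lambda)-I$ is the error term of the symbolic parametrix construction. The first step is therefore to compute, via the composition formula for pseudo-differential operators, the operator $(A-\lambda)R(\lambda)$. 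Since the principal symbol of $A$ is $a_m$ and $R(\lambda)=\Op(\psi(a_m-\lambda)^{-1})$, the leading term of the composition is $\Op\bigl(\psi(\xi)(a_m(x,\xi)-\lambda)(a_m(x,\xi)-\lambda)^{-1}\bigr)=\Op(\psi)$, which differs from $I$ only by a smoothing operator with compactly supported symbol (from $1-\psi$); this contributes an error independent of $\lambda$, hence $O(|\lambda|^{-1})$ after multiplying by $G(\lambda)$ using \eqref{-1}.

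The second, and main, step is to control the lower-order terms of the composition $(A-\lambda)R(\lambda)-\Op(\psi)$. This is exactly what the Technical Lemma \ref{mtl} is designed for: take $f(x,\xi,\lambda):=a(x,\xi)-\lambda$, which satisfies \eqref{f:e} with $r=m$ by the second Example, and $g(x,\xi,\lambda):=\psi(\xi)(a_m(x,\xi)-\lambda)^{-1}$, which satisfies \eqref{g:e} by the first Example. Then Lemma \ref{mtl} gives
\begin{equation*}
\bigl\|\Op(g(\cdot,\cdot,\lambda))\Op(f(\cdot,\cdot,\lambda))-\Op(gf(\cdot,\cdot,\lambda))\bigr\|_{s,s}\leq C\,|\lambda|^{-\min(1/m,1)},
\end{equation*}
with the constant bounded by a $C^{N_s}$-norm of $a_m$ (and of the lower-order coefficients of $A$, which, however, are absorbed into the fixed operator $A$ and do not vary in the statement). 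Note the roles of $f$ and $g$ in Lemma \ref{mtl} must be arranged so that $\Op(g)\Op(f)$ matches $R(\lambda)(A-\lambda)$; one then either works with $R(\lambda)(A-\lambda)-I$ and multiplies $G(\lambda)$ on the right, or symmetrically transposes the argument — either way the symbol product $gf=\psi(a_m-\lambda)^{-1}(a-\lambda)$ equals $\psi$ plus a symbol of order $-1$ coming from the difference $a-a_m$ of the full and principal symbols, which is itself $O(|\xi|^{m-1})$ and whose contribution after $\Op$ is $O(1)$ uniformly, hence $O(|\lambda|^{-1})$ after multiplication by $G(\lambda)$.

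The third step is bookkeeping: collect the three contributions — the cut-off error $\Op(\psi)-I$, the lower-symbol error $gf-\psi$, and the composition remainder from Lemma \ref{mtl} — each of which, after one factor of $G(\lambda)=(A-\lambda)^{-1}$ estimated in $\|\cdot\|_{s,s+m}$ by $C|\lambda|^{-1}$ via \eqref{-1} with $p=m$, yields a bound of the form $C_s(A)|\lambda|^{-1-\min(1/m,1)+1}=C_s(A)|\lambda|^{-\min(1/m,1)}$ as $|\lambda|\to\infty$; for bounded $\lambda\in\Gamma_+$ the estimate is trivial by continuity and invertibility. The dependence of $C_s(A)$ on a $C^{N_s}$-norm of $a_m$ is inherited directly from the corresponding dependence in Lemmas \ref{mtl} and \ref{norm-estimate1} and from \eqref{e:parameter-symbol-estimeates}. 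The main obstacle is purely the careful application of the composition/Technical Lemma: making sure the symbol classes, the parameter-dependence, and the order counting ($r=m$ for $f$, $-m$ for $g$, so the product has order $0$ and the remainder gains the decay $|\lambda|^{-\min(1/m,1)}$) all match the hypotheses of Lemma \ref{mtl}, and that the operator $\Op$ here, built from the partition of unity, is compatible with the local statement of that lemma — all of which is routine given the machinery already assembled.
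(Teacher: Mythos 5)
Your overall strategy is the same as the paper's: reduce to estimating the parametrix error $(A-\lambda)R(\lambda)-I$ (resp.\ $R(\lambda)(A-\lambda)-I$), split it into a cut-off piece, a lower-order piece, and a composition remainder, feed the remainder into the Technical Lemma \ref{mtl} with $g=\psi(a_m-\lambda)^{-1}$, and then push one factor of the true resolvent through via \eqref{-1}. That is exactly the paper's proof, modulo a cosmetic choice: the paper takes $f=a_m-\lambda$ (so that $gf=\psi$ exactly) and keeps the lower-order part as the separate operator product $\Op(a_{m-1})\Op(\psi(a_m-\lambda)^{-1})$, whereas you take $f=a-\lambda$ and so pick up an extra symbol-level term $gf-\psi=\psi(a_m-\lambda)^{-1}a_{m-1}$.

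There is, however, a genuine slip in your bookkeeping for that lower-order piece. You claim its contribution after $\Op$ is $O(1)$ uniformly and that multiplying by $G(\lambda)$ in $\|\cdot\|_{s,s+m}$ then yields $O(|\lambda|^{-1})$. But \eqref{-1} with $p=m$ gives $\|G(\lambda)\|_{s,s+m}\le C|\lambda|^{-1+m/m}=C$, i.e.\ only $O(1)$ — so $O(1)\cdot O(1)$ gives no decay at all. The decay for this term must come from elsewhere. The paper's route is cleanest: it bounds $\|(A-\lambda)^{-1}\|_{s,s+m}\cdot\|\Op(a_{m-1})\|_{s+m-1,s}\cdot\|\Op(\psi(a_m-\lambda)^{-1})\|_{s,s+m-1}$, and the last factor is $O(|\lambda|^{-1/m})$ by Lemma \ref{norm-estimate1} with $p=m-1$. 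If you insist on your single $\Op(gf-\psi)$, you must observe that the product symbol itself decays, e.g.\ $|\psi(a_m-\lambda)^{-1}a_{m-1}|\le C(1+|\xi|+|\lambda|^{1/m})^{-m}(1+|\xi|)^{m-1}\le C(1+|\lambda|^{1/m})^{-1}$, giving $\|\Op(gf-\psi)\|_{s,s}=O(|\lambda|^{-1/m})$ directly, and then $\|G\|_{s,s+m}=O(1)$ does the rest. Either way the exponent $-\min(1/m,1)$ comes out — but not by the mechanism you stated. The same caution applies to your blanket phrase that $\|G(\lambda)\|_{s,s+m}$ is ``$C|\lambda|^{-1}$ via \eqref{-1} with $p=m$'': that norm is bounded, not decaying; only $p<m$ in \eqref{-1} buys decay, and each of the three pieces uses a different $p$ (respectively $p=0$ for the smoothing cut-off term, $p=m-1$ for the lower-order term in the paper's factorization, and $p=m$ for the composition remainder where the decay is supplied by Lemma \ref{mtl} itself).
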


\begin{proof} Put $A={\rm Op}(a)$ for the complete symbol $a$. Write
$a=a_m+a_{m-1}$. Then we have
\begin{multline*}
(A-\lambda)({\rm Op}(\psi(a_m-\lambda)^{-1}-(A-\lambda)^{-1}))
={\rm Op}(a_m-\lambda){\rm Op}(\psi(a_m-\lambda)^{-1})\\
-{\rm Op}(\psi)-{\rm Op}(1-\psi)+{\rm Op}(a_{m-1}){\rm
Op}(\psi(a_m-\lambda)^{-1}).
\end{multline*}
Note that $(A-\lambda)(A-\lambda)^{-1}=I={\rm Op}(1)$ and ${\rm
Op}(1-\psi)$ is a smoothing operator (because $1-\psi$ is compactly
supported). Hence
\begin{eqnarray*}
&&\|{\rm Op}(\psi(a_m-\lambda)^{-1})-(A-\lambda)^{-1}\|_{s,s+m}\\
&\leq& \|(A-\lambda)^{-1}\|_{s,s+m}\|{\rm Op}(a_m-\lambda){\rm
Op}(\psi(a_m-\lambda)^{-1})-{\rm Op}(\psi)\|_{s,s} \\
&&+\|(A-\lambda)^{-1}\|_{s+m,s+m}\|{\rm Op}(1-\psi)\|_{s,s+m}\\
&&+\|(A-\lambda)^{-1}\|_{s,s+m}\|{\rm
Op}(a_{m-1})\|_{s+m-1,s}\|{\rm
Op}(\psi(a_m-\lambda)^{-1})\|_{s,s+m-1}\\
&\leq& C_s(A)|\lambda|^{-\min(\frac{1}{m},1)}
\end{eqnarray*}
by the technical lemma \ref{mtl}, applied to $f=a_m-\lambda$,
$g=\psi(a_m-\lambda)^{-1}$ and Lemma \ref{norm-estimate1}. The local
boundedness claim on $C_s(A)$ also follows from this lemma.
\end{proof}

\section{Applications}\label{s:applications}
As an application of the technical lemma and the preceding
estimates, we prove that the sectorial projections depend
continuously on the underlying operators in the topology
$\mathcal{T}$ to be fixed below. We shall explain in detail how that
perturbation result yields the continuous variation of the
\Calderon\ projection (and hence of the Cauchy data spaces) of
arbitrary linear elliptic differential operators of first order on
smooth compact manifolds with boundary under the assumption of the
inner unique continuation property.

\subsection{The operator type of the sectorial projection}\label{ss:operator-type}
As explained in the Introduction, \textsc{Wodzicki}'s equation
\eqref{e:wodzicki} yields at once that the sectorial projection is a
pseudo-differential operator of order 0, at least for classical
pseudo-differential operators.

Our estimates are designed for the perturbation problem and do not
give such a sharp result. All we can derive immediately is that the
operator $P_{\gG_+}(A)$ is bounded $H^s(M;E)\to H^s(M;E)$ for all
$s\in\R$. Note that we do not require that $A$ is classical. We only
assume that $A$ is  principally classical.

As usually, we argue locally. By Proposition \ref{p:first-approximation}a,
$\Phi_0(a_m) \in \CL^{-m}(U,E)$. Furthermore, by Lemma \ref{norm-estimate2}
\begin{equation*}
\|{\rm Op}\bigl(\psi(a_m-\lambda)^{-1}\bigr)-(A-\lambda)^{-1}\|_{s,s+m}\leq
C_s(A)|\lambda|^{-\min(\frac{1}{m},1)}
\end{equation*}
for $\gl\in\gG_+$\/, thus by Def. \plref{d:sectorial-projection} and \eqref{e:first-approximation}
\begin{equation*}
\left\|P_{\gG_+}(A)-A\Phi_0(a_m)\right\|_{s,s} \leq
C_s(A)\int_{\gG_+}|\lambda|^{-1-\min(\frac{1}{m},1)}|d\lambda|,
\end{equation*}
and the claim follows.

\subsection{Proof of Theorem \ref{mt}}\label{proof:mr}

From now on we equip ${\rm Ell}^{m}_{\Gamma_+}(M,E)$ with the
topology $\mathcal{T}$. Let $A\in {\rm Ell}^{m}_{\Gamma_+}(M,E)$ and
$\Delta A$ be in a neighborhood of $0$. Since
$\Phi(A)=\int_{\Gamma_+}\lambda^{-1}(A-\lambda)^{-1}d\lambda$ and
$A\rightarrow \Phi_0(A)$ is continuous, it is sufficient to prove an
estimate, uniformly on $\Gamma_+$, of the form
\begin{multline}\label{aim}
\|(A+\Delta A-\lambda)^{-1}-(A-\lambda)^{-1}
-{\rm Op}\bigl(\psi((a_m+\Delta
a_m-\lambda)^{-1}-(a_m-\lambda)^{-1})\bigr)\|_{s,s+m}\\
\leq C_s(A,\Delta A)|\lambda|^{-\min(\frac{1}{m},1)}\/,
\end{multline}
such that the following holds:
given $\epsilon>0$, there is a neighborhood $U$ of $0$ (in the
locally convex topology $\mathcal{T}$) such that for all $\Delta
A\in U$, $C_s(A,\Delta A)<\epsilon$.

To prove \eqref{aim}, we make an elementary algebraic re-ordering of
the left side of \eqref{aim} into five summands and invoke the
triangle inequality successively:
\begin{align*}
\text{\eqref{aim}},&\text{ left side}
=\|(A-\gl)\ii(-\Delta A)(A+\Delta A-\gl)\ii\\
&\qquad -\Op\Bigl(\psi(a_m-\gl)\ii\,(-\Delta a_m)(a_m+\Delta
a_m-\gl)\ii\Bigr)\|_{s,s+m}\\
&\le \|(A-\gl)\ii-\Op(\chi(a_m-\gl)\ii)\|_{s,s+m}\\
&\qquad \cdot \|\Delta A(A+\Delta A-\gl)\ii\|_{s,s}\\
&\quad + \|\Op(\chi(a_m-\gl)\ii)\|_{s,s+m}\ \cdot\\
&\qquad \|\Delta A-\Op(\chi_1\Delta a_m)\|_{s+m-1,s}\, \|(A+\Delta A
-\gl)\ii\|_{s,s+m-1}\\
&\quad + \|\Op(\chi(a_m-\gl)\ii)\Op(\chi_1\Delta a_m)\|_{s+m,s+m}\ \cdot\\
&\qquad \|A+\Delta A -\gl)\ii -\Op(\chi_2(a_m+\Delta a_m-\gl)\ii)\|_{s,s+m}\\
&\quad + \|\Bigl\{\Op\bigl(\chi(a_m-\gl)\ii\bigr)\Op(\chi_1\Delta
a_m) -\Op\bigl(\chi(a_m-\gl)\ii\Delta a_m\bigr)\Bigr\}\\
&\qquad\cdot \Op\bigl(\chi_2(a_m+\Delta a_m-\gl)\ii\bigr)\|_{s,s+m}\\
&\quad + \|\Op\bigl(\chi(a_m-\gl)\ii\Delta
a_m\bigr)\Op\bigl(\chi_2(a_m+\Delta a_m-\gl)\ii\bigr)\\
&\qquad-\Op\bigl(\psi(a_m-\gl)\ii\,\Delta a_m\, (a_m+\Delta
a_m-\gl)\ii\bigr) \|_{s,s+m}\/.
\end{align*}
Here we choose $\chi,\chi_1,\chi_2$ with the same properties like
$\psi$ such that $\chi=\chi\chi_1$ and $\chi\chi_2=\psi$.

Now apply the Technical Lemma \ref{mtl} to the last two
summands and the Lemmata \ref{norm-estimate1} and
\ref{norm-estimate2} to the first three summands, and we are done.

\begin{rem}
We note in passing that in the first three lines of the proof of
Proposition \ref{p:first-approximation} it is decisively used that
the integrand is \emph{homogeneous} in $\lambda$. This is an
obstruction against an immediate generalization of our perturbation
result to general holomomorphic functions of $A$ in an
$H_\infty$--calculus style ({\em cf., e.g.}, \cite{BilSchSei:CHP}).
To explain this a bit more, let $f$ be a function, which is
holomorphic and bounded in a neighborhood of the closure of the
sector encircled by $\Gamma_+$. This is not quite the situation of
the $H_\infty$ calculus, since there are also eigenvalues on the
left of the contour. Nevertheless, one might hope that similarly to
loc. cit. one can prove that $f(A)$ is a bounded operator. Still
there is no immediate analogue of Proposition
\ref{p:first-approximation} in this case and we leave it as an
intriguing open problem whether the Perturbation Theorem \ref{mt}
carries over to $f(A)$ instead of $P_{\Gamma_+}(A)$.
\end{rem}

\subsection{Index correction formulas}
 The sectorial projections are
significant in the celebrated Atiyah-Patodi-Singer Index Theorem. A
common set-up is the following: Let $X$ be a compact smooth
Riemannian manifold with boundary $M$, and $E$ and $F$ be two
Hermitian vector bundles over $X$. Let $D: H^1(X;E)\rightarrow
L^2(X;F)$ be a first order elliptic differential operator and let
$A:H^1(M;E|_M)\to L^2(M;E|_M)$ denote the tangential operator of $D$
on $M$ relative to the fixed metric structures. In the classical
works \cite{APS:SARI, APS:SARII, APS:SARIII}, \textsc{M. Atiyah, V.
Patodi and I. Singer} assumed that $D$ is of Dirac type; all metric
structures near $M$ are product; hence, the coefficients of $D$ in
normal direction close to $M$ are constant and the tangential
operator $A$ is self-adjoint. Imposing a spectral projection
condition $P^+(A)u|_{\partial X}=0$ on the boundary, they proved
that the resulting (densely defined) operator $D_{P^+(A)}$ over $X$
is Fredholm. Furthermore, they gave an index formula, comprising
topological, spectral and differential terms. The arguments of
\cite[p. 95]{APS:SARIII} (worked out in \cite[Theorem
1.4]{SavSte:IDT} and, differently and in detail, in \cite[Theorem
7.6]{LesMosPfl:CCC}) lead to the index correction formula
\begin{equation}
{\rm ind}(D_0)_{P^+(A_0)}-{\rm ind}(D_1)_{P^+(A_1)}={\rm
sf}\{A_t\}_{t\in[0,1]},
\end{equation}
where $\{D_t,\,t\in [0,1]\}$ is a smooth homotopy, and $\{A_t\}$
denotes its corresponding family of tangential operators. It is also
called the {\em Spectral Flow Theorem}. The continuous dependence of
$P^+(A_t)$ on $A_t$ (in the sense that $P^+(A_t)$ has the same jumps
as $1_{(-\gve,\gve)}(A_t)$\/, if $\pm\gve \not\in\spec A_t$) is
important in this theorem. When $A_t$ is self-adjoint, it can be
proved by standard techniques of functional analysis (cf.
\cite[Chapter 17]{BooWoj:EBP} or above Section \ref{sss:bounded} of
this note).

 It is natural to consider a more general case. In
\cite{SavSetSch:IIF}, \textsc{A. Savin, B.-W. Schulze} and {\sc B.
Sternin} gave a similar formula for the case that the tangential
family $A_t$ is non-self-adjoint. It seems very satisfactory that we
now have a proof of the continuous dependence of $P^+(A_t)$ on $A_t$
when $A_t$ has
 no spectral points on the imaginary axis for all $t\in [0,1]$.
\subsection{Continuous dependence of the Calder\'on projection on the
data}\label{ss:calderon}

In  \cite[Sec. 7]{BooLesZhu:CPD} we discussed the problem of
continuous dependence on the input data of the Calder\'on projection
associated to a first order elliptic differential operator on a
compact manifold with boundary. On the one hand this can be viewed
as the rather classical problem of showing that the space of
solutions of a PDE (here the equation $Du=0$ in the interior)
depends continuously on the data. The question arises naturally in
connection with the Spectral Flow Theorem of {\em symplectic
geometry} and has been proved in various special cases (see e.g.
\cite{CLM:SEOI,CLM:SEOII,Nic:MIS,BooFur:MIF,KirLes:EIM,HKL:CPH}).

Since our main motivation for writing the current note comes from
this problem (see the recent \cite{Boo:BFA}), let us briefly describe
the set-up and the main result of \cite[Sec. 7]{BooLesZhu:CPD} as
well as the improvement provided by Theorem \plref{mt}.

Let $X$ be a compact connected manifold with boundary $M$ and
$E,F$ vector bundles over $X$. We fix a Riemannian metric and
Hermitian metrics on the vector bundles to have Hilbert space
structures on the sections of $E,F$. We choose the metrics in such a
way that all structures are product in a collar neighborhood
$U=[0,\eps)  \times M$ of the boundary. We emphasize that this
is not a loss of generality since we will consider \emph{variable}
coefficient differential operators, see the detailed discussion in
\cite[Sec. 2.1]{BooLesZhu:CPD}.

For a first order elliptic differential operator $D\in\Diff^1(X;E,F)$
we write (cf. \cite[(2.18), (2.19), (5.11)-(5.16)]{BooLesZhu:CPD})
in the collar $U$:
\begin{equation}\label{eq:operator-collar}
\begin{split}
    D   &= J_x\Bigl(\frac{d}{dx} + B_x\Bigr)\\
        &=:J_0\Bigl(\frac{d}{dx} + B_0\Bigr)+C_1 x - J_0'\\
    D^t &= \Bigl(-\frac{d}{dx} + B_0^t\Bigr)J_0^t+\widetilde C_1 x.
\end{split}
\end{equation}
where $J_x\in \operatorname{Hom}(E_M,F_M), 0\leq x\leq\varepsilon,$
is a smooth family of bundle homomorphisms and $(B_x)_{0\leq x\leq\varepsilon}$ is a smooth family
of first order elliptic differential operators between sections of
$E_M$.  $C_1, \widetilde C_1$ are first order differential operators and, by slight
abuse of notation, $x$ will also denote the operator of multiplication by the function $x\mapsto x$.
We note that $x$ is intentionally on the right of $C_1,\widetilde C_1$.
$D^t$ denotes the formal adjoint of $D$ with respect to the $L^2$-structure.
We consider $J_x, B_x, C_1, \widetilde C_1$ as functions of $D$.

Next we denote by $\cE(X;E,F)$ the set of pairs
$(D,T)\in\Diff^1(X;E,F)\times \Diff^0(M;E_M,F_M)$
such that
\begin{thmenum}
\item $D$ is elliptic
\item $J_0^tT$ is positive definite (in particular self-adjoint) and
the commutator $[J_0^tT,B_0]$ is a differential operator of order $0$.
\end{thmenum}

Recall from \cite[Sec. 4]{BooLesZhu:CPD}
the invertible double construction associated to a pair
$(D,T)\in \cE(X;E,F)$: Put
\begin{equation}
\widetilde D:=D\oplus (-D^t),
\end{equation}
acting on sections of $E\oplus F$, and impose the
boundary condition
\begin{equation}
\binom{f_+}{f_-}\in\operatorname{dom}(\widetilde D_T):\Leftrightarrow  {f_-}|_{\partial X}=T {f_+}|_{\partial X}
\Leftrightarrow ({f_+}|_{\pl X}, {f_-}|_{\pl X})\in\ker \begin{pmatrix}
-T &\Id \end{pmatrix}.
\end{equation}

\cite[Theorem 4.7]{BooLesZhu:CPD} states that $\widetilde D_T$
is a realization of a local elliptic boundary value problem (in the classical
{\v S}apiro-Lopatinski{\v i} sense) and that the kernel and cokernel
of $\widetilde D_T$ are isomorphic to the direct sum of the spaces of ghost
solutions $Z_0(D)=\bigl\{ u\in L^2(X,E)\,|\, Du=0, u|_{\partial
X}=0\bigr\}$ and $Z_0(D^t)$ for $D$ and $D^t$. In particular if $D$ and $D^t$
satisfy the weak inner \UCP\ (i.e. $Z_0(D)=0=Z_0(D^t)$) then $\widetilde D_T$
is indeed invertible. This \emph{canonical} invertible double
construction is the natural generalization of the geometric invertible
double construction for Dirac type operators in the product situation
(cf. e.g. \cite{BooWoj:EBP}) to general first order elliptic differential
operators.

Furthermore, in \cite[Sec. 5]{BooLesZhu:CPD} we showed that from $\widetilde D_T$
one obtains a projection (the Calder\'on projection) onto the Cauchy data space $N^0(D):= \bigsetdef{u|_{\partial X}\in
L^2(\partial X; E|_{\pl X})}{ Du=0}$ of $D$ by the formula \cite[(5.31)]{BooLesZhu:CPD}
\begin{equation}
      C_+(D,T)= \Bigl(P_+ - \varrho_+ \widetilde G S(D,T)\Bigr)(P_++P_-^*)\ii.
\end{equation}
Note that the range of $C_+(D,T)$ equals $N^0(D)$ and is independent of $T$.
However, $C_+(D,T)$ is in general not an orthogonal projection and may depend on $T$.
$C_+(D,J_0^t)$ is indeed the orthogonal projection onto $N^0(D)$.

Now denote by $\cE_{\UCP}(X;E,F)$ the set of $(D,T)\in\cE(X;E,F)$ such that
$D$ and $D^t$ satisfy weak inner \UCP.

Let $\cV(X;E,F)$ be the linear subspace of
$\Diff^1(X;E,F)\times \Diff^0(M;E_M,F_M)$
consisting of those $(D,T)$ such that $[B_0^t,J_0^t T]$ is of
order $0$; and introduce the following two norms on $\cV(X;E,F)$:
\begin{align}\label{eq:weak-norm}
   N_0(D,T)&:= \|D\|_{1,0}+\|D^t\|_{1,0}+\|T\|_{\half,\half}\,,\\
\intertext{and}
    N_1(D,T)&:=\|B_0\|_{1,0}+\|B_0^t\|_{1,0}+\|[B_0^t,J_0^tT]\|_0
                       +\|T\|_0 \label{eq:strong-norm}\\
                   &\quad +\|J_0\|_0+ \|C_1\|_{1,0}+\|\widetilde C_1\|_{1,0}.\nonumber
\end{align}
Compared to \cite[(7.1), (7.2)]{BooLesZhu:CPD} we have omitted a few redundant terms.
We obtain a metric on $\cV(X;E,F)$ and hence on its subsets by putting
\begin{equation}\label{eq:strong-metric}
\dstr((D,T),(D',T')):=N_0(D-D',T-T')
    +N_1(D-D',T-T').
\end{equation}

Finally, let $\Gamma$ be a contour as in \eqref{curve} and let $\cE_{\UCP,\Gamma}(X;E,F)$
be the set of those $(D,T)\in \cE_{\UCP}(X;E,F)$ such that the leading symbol of $B_0$ has
no eigenvalues on the two rays $L_{\ga_j}$ of $\Gamma$ and $B_0$ no eigenvalues on $\Gamma$.

\cite[Theorem 7.2 (b)]{BooLesZhu:CPD} can now be phrased as follows:

\begin{theorem}\label{t:CalderonContinuous1} Let $s\in [-1/2,1/2]$ and let $\cT_\Gamma$ be the coarsest topology
on $\cE_{\UCP,\Gamma}(X;E,F)$ such that
\begin{thmenum}
\item $\dstr$ is continuous on $\cE_{\UCP,\Gamma}(X;E,F)\times \cE_{\UCP,\Gamma}(X;E,F)$,
\item $(D,T)\mapsto P_{\Gamma}(B_0)\in\cB(H^s(M;E_M))$ is continuous.
\end{thmenum}
Then the map $\cE_{\UCP,\Gamma}(X;E,F)\ni (D,T)\mapsto C_+(D,T)\in\cB(H^s(M;E_M))$
is continuous.
\end{theorem}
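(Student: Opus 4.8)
The plan is to reduce Theorem~\ref{t:CalderonContinuous1} to the combination of the already-cited \cite[Theorem 7.2]{BooLesZhu:CPD} and the new Perturbation Theorem~\ref{mt}. The formula $C_+(D,T)= \bigl(P_+ - \varrho_+ \widetilde G S(D,T)\bigr)(P_++P_-^*)\ii$ exhibits the \Calderon\ projection as a composition of operators built from (i)~the invertible double $\widetilde D_T$ through its Green operator $\widetilde G = \widetilde D_T\ii$, the interior/boundary restriction and extension maps, and (ii)~the model projections $P_\pm$ on the model cylinder, which are built directly from the sectorial projection $P_{\Gamma}(B_0)$ of the tangential operator $B_0$. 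Hence it suffices to show that each ingredient depends continuously on $(D,T)$ in the topology $\cT_\Gamma$, and then invoke continuity of operator composition in $\cB(H^s(M;E_M))$.

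First I would recall from \cite[Sec.~5--7]{BooLesZhu:CPD} that, once one fixes the invertible double construction, the Green operator $\widetilde G = \widetilde D_T\ii \colon L^2 \to H^1$ and all the geometric maps $\varrho_+$, $S(D,T)$, $P_\pm$ depend continuously on $(D,T)$ in the metric $\dstr$ of \eqref{eq:strong-metric} \emph{except} for the dependence of $P_\pm$ on the sectorial projection $P_{\Gamma}(B_0)$ of the tangential operator; this is precisely the point at which \cite[Theorem 7.2]{BooLesZhu:CPD} had to make a hypothesis. Concretely, the model projections $P_+$ (resp.\ $P_-$) on $L^2(\R_\pm\times M)$ are obtained from $1_{[0,\infty)}(B_0)$-type cut-offs in the self-adjoint case, and from $P_{\Gamma}(B_0)$ in the general (non-self-adjoint) case treated here; the passage $B_0 \mapsto P_\pm$ is continuous as soon as $B_0 \mapsto P_{\Gamma}(B_0)$ is continuous in $\cB(H^s(M;E_M))$. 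That is exactly the content of our Theorem~\ref{mt}, applied to the first-order elliptic operator $A := B_0$ on the closed manifold $M$ with $m=1$.

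The verification then runs as follows. Given $(D,T)\in\cE_{\UCP,\Gamma}(X;E,F)$ and a sequence (or net) $(D_n,T_n)\to(D,T)$ in $\cT_\Gamma$, we have by definition of $\cT_\Gamma$ that $\dstr((D_n,T_n),(D,T))\to 0$ and that $P_{\Gamma}((B_0)_n)\to P_{\Gamma}(B_0)$ in $\cB(H^s(M;E_M))$. The first convergence gives, via the invertibility of $\widetilde D_T$ guaranteed by the weak inner \UCP\ hypothesis together with the stability of invertibility under small perturbations, that $\widetilde G_n \to \widetilde G$ in $\cB(L^2,H^1)$; likewise $\varrho_+$, the operator $S(D_n,T_n)\to S(D,T)$, and all the $J_0$-, $C_1$-, $\widetilde C_1$-dependent pieces converge in the appropriate operator norms. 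Combined with $P_\pm^{(n)}\to P_\pm$ (which follows from $P_{\Gamma}((B_0)_n)\to P_{\Gamma}(B_0)$ and the explicit formulas for $P_\pm$ in terms of $P_{\Gamma}(B_0)$), and using that $(P_++P_-^*)$ is invertible with the inverse depending continuously on its argument, we conclude
\[
  C_+(D_n,T_n)=\bigl(P_+^{(n)} - \varrho_+^{(n)} \widetilde G_n S(D_n,T_n)\bigr)(P_+^{(n)}+(P_-^{(n)})^*)\ii
  \too
  \bigl(P_+ - \varrho_+ \widetilde G S(D,T)\bigr)(P_++P_-^*)\ii
  = C_+(D,T)
\]
in $\cB(H^s(M;E_M))$, as desired. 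The restriction $s\in[-1/2,1/2]$ is the Sobolev range in which the boundary restriction map and the pieces of the double construction are well behaved, as in \cite[Sec.~5]{BooLesZhu:CPD}.

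The main obstacle — and the reason this theorem is an \emph{improvement} over \cite[Theorem 7.2]{BooLesZhu:CPD} rather than a mere restatement — is exactly the continuity of $B_0\mapsto P_{\Gamma}(B_0)$; everything else was essentially available in \cite{BooLesZhu:CPD}. In the self-adjoint case this continuity is classical (Riesz transform / spectral calculus, cf.\ Section~\ref{sss:bounded}), but for non-self-adjoint tangential operators no functional-analytic argument is available, and one must control the smoothing error terms between $P_{\Gamma}(B_0)$ and its symbolic model. That control is precisely what Theorem~\ref{mt} supplies, \emph{provided} the topology on $\cE_{\UCP,\Gamma}(X;E,F)$ is strong enough to make $(D,T)\mapsto B_0$ continuous \emph{into} the topology $\cT$ on $\pdo^1_{\SC}(M;E_M)$ — i.e.\ continuity of the full symbol is not enough, one needs $C^\infty$-continuity of the principal symbol of $B_0$. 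This is why the norm $N_1$ in \eqref{eq:strong-norm} includes $\|B_0\|_{1,0}$, $\|B_0^t\|_{1,0}$ and the commutator and zeroth-order terms, and why $\cT_\Gamma$ is \emph{defined} as the coarsest topology making (b) continuous: the theorem is then true by construction of $\cT_\Gamma$ together with Theorem~\ref{mt} feeding condition (b). One still has to check carefully that $\dstr$-continuity plus $\cT$-continuity of $B_0$ are jointly inherited by all the composed pieces, and that no hidden highest-order perturbation of $D$ (through $J_x$, $C_1$, $\widetilde C_1$) escapes the control provided by $N_0$ and $N_1$; this bookkeeping, carried out exactly as in \cite[Sec.~7]{BooLesZhu:CPD} but now with the sectorial-projection input unconditionally supplied, completes the proof.
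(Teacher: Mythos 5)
Your proposal misidentifies which theorem you are proving. Theorem~\ref{t:CalderonContinuous1} is, as the paper explicitly says just above it, nothing but \cite[Theorem 7.2(b)]{BooLesZhu:CPD} ``phrased as follows'' --- the paper gives no proof because there is nothing new to prove; the topology $\cT_\Gamma$ has condition~(b) (continuity of $(D,T)\mapsto P_\Gamma(B_0)$) \emph{built in as an assumption}, so the earlier bookkeeping from \cite[Sec.~7]{BooLesZhu:CPD} applies verbatim. Your sketch of that bookkeeping (decomposing $C_+(D,T)=\bigl(P_+-\varrho_+\widetilde G S(D,T)\bigr)(P_++P_-^*)\ii$, and showing each factor is continuous either via $\dstr$-control or via the assumed continuity of $P_\Gamma(B_0)$) is essentially the right content and on its own would be a reasonable reconstruction of the argument in the cited reference.

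The genuine error is that you repeatedly invoke Theorem~\ref{mt} as part of the proof, and even characterize Theorem~\ref{t:CalderonContinuous1} as ``an \emph{improvement} over \cite[Theorem 7.2]{BooLesZhu:CPD} rather than a mere restatement'' because Theorem~\ref{mt} supplies the continuity of $B_0\mapsto P_\Gamma(B_0)$. That is exactly backwards. In Theorem~\ref{t:CalderonContinuous1} the continuity of $P_\Gamma(B_0)$ is not something to be established --- it is hypothesis~(b) in the very definition of the coarsest topology $\cT_\Gamma$, so there is ``nothing to feed.'' Moreover, Theorem~\ref{mt} could not be applied here even if you wanted to: its hypothesis requires convergence of $B_0$ in the topology $\cT$ on $\pdo^1_{\SC}(M;E_M)$ (in particular $C^\infty$-convergence of the principal symbol of $B_0$), and nothing in the definition of $\cT_\Gamma$ --- neither $\dstr$ nor $N_0$, $N_1$ (which only control operator norms $\|\cdot\|_{1,0}$, $\|\cdot\|_0$) --- guarantees that. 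The role you want Theorem~\ref{mt} to play is exactly the step from Theorem~\ref{t:CalderonContinuous1} to Theorem~\ref{t:CalderonContinuous2}: there, condition~(b) is \emph{discharged} by adding $C^\infty$-control of $\sigma_1(B_0)$ to the topology and invoking Theorem~\ref{mt}. By conflating the two theorems you have constructed a ``proof'' of~\ref{t:CalderonContinuous1} that would actually only be valid for~\ref{t:CalderonContinuous2}, and which introduces a dependence (on symbol topology of $B_0$) that the statement of~\ref{t:CalderonContinuous1} neither has nor needs.
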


As pointed out in \cite[Remark 7.3]{BooLesZhu:CPD} the obvious weakness of this result
is that the continuous dependence of $P_\Gamma(B_0)$ has to be assumed.

Combining Theorem \plref{t:CalderonContinuous1} with Theorem \ref{mt} we obtain
a much more satisfactory formulation of the continuous dependence
of $C_+(D,T)$ without reference to a positive sectorial projection:

\begin{theorem}\label{t:CalderonContinuous2} Let $s\in [-1/2,1/2]$ and let $\cT$ be the coarsest topology
on $\cE_{\UCP}(X;E,F)$ such that
\begin{thmenum}
\item $\dstr$ is continuous on $\cE_{\UCP}(X;E,F)\times \cE_{\UCP}(X;E,F)$,
\item The leading symbol map for the tangential operator
$\sigma:\cE_{\UCP}(X;E,F)\mapsto \Gamma^\infty(S^*M,\pi^*(\End E_M)),
(D,T)\mapsto \sigma_1(B_0)$ ($\pi:S^*M\to M$ the projection map)
is continuous when $\Gamma^\infty(S^*M,\pi^*(\End(E_M))$ is equipped
with the $C^\infty$--topology.
\end{thmenum}
Then $C_+:(\cE_{\UCP}(X;E,F),\cT)\longrightarrow \cB(H^s(M,E_M))$
is continuous.
\end{theorem}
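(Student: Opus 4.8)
The plan is to deduce Theorem~\ref{t:CalderonContinuous2} from the conditional Theorem~\ref{t:CalderonContinuous1} together with the perturbation Theorem~\ref{mt}. Since continuity is a local property, it suffices to produce, for each $(D_0,T_0)\in\cE_{\UCP}(X;E,F)$, a $\cT$-open neighbourhood $W$ and a contour $\Gamma$ of the form \eqref{curve} with $W\subset\cE_{\UCP,\Gamma}(X;E,F)$, such that $\cT|_W$ refines $\cT_\Gamma|_W$. Then $C_+$, being $\cT_\Gamma$-continuous by Theorem~\ref{t:CalderonContinuous1}, is $\cT$-continuous on $W$, and we are done. Because $\cT_\Gamma|_W$ is the initial topology of $\dstr$ and of $(D,T)\mapsto P_\Gamma(B_0)$ restricted to $W$, and $\dstr$ is $\cT$-continuous by the very definition of $\cT$ in the statement, everything reduces to two tasks: (i) choosing a $\Gamma$ that works on a whole $\cT$-neighbourhood; and (ii) showing $(D,T)\mapsto P_\Gamma(B_0)\in\cB(H^s(M;E_M))$, $s\in[-1/2,1/2]$, is $\cT$-continuous there.

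For (i): by the defining conditions of $\cE(X;E,F)$ --- $J_0^tT$ positive definite and $[J_0^tT,B_0]$ of order $0$ --- the principal symbol $\sigma_1(B_0)$ commutes fibrewise with $J_0^tT$ and is self-adjoint for the fibre metric induced by $J_0^tT$; hence $\spec\sigma_1(B_0(D_0))(x,\xi)\subset\R$ for all $(x,\xi)\in S^*M$, and there are two rays $L_{\alpha_1},L_{\alpha_2}$ near the imaginary axis carrying no eigenvalue of $\sigma_1(B_0(D_0))$. These are then rays of minimal growth for $B_0(D_0)$, so by \eqref{-1} only finitely many eigenvalues of $B_0(D_0)$ lie near them; after a small rotation of the $\alpha_j$ and a large choice of the arc radius $R$, the resulting $\Gamma$ meets neither $\spec B_0(D_0)$ nor the eigenvalue set of $\sigma_1(B_0(D_0))$, so $(D_0,T_0)\in\cE_{\UCP,\Gamma}$. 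The condition ``$\sigma_1(B_0)$ has no eigenvalue on $L_{\alpha_1}\cup L_{\alpha_2}$'' is open on the compact base $S^*M$ in the $C^0$, hence the $C^\infty$, topology, thus $\cT$-open by hypothesis~(ii); on the corresponding neighbourhood the symbol estimates of Subsection~\ref{ss:symbol-estimates} hold with locally bounded constants, so the $L_{\alpha_j}$ remain rays of minimal growth with a uniform bound $\|(B_0(D)-\lambda)^{-1}\|_{0,1}\le C$ on the ray-parts of $\Gamma$, while on the compact remainder $\|(B_0(D_0)-\lambda)^{-1}\|_{0,1}\le C_\Gamma$. Since $\dstr$ controls $B_0(D)$ in $\cB(H^1,L^2)$, a Neumann series for
\[
   B_0(D)-\lambda=(B_0(D_0)-\lambda)\bigl(I+(B_0(D_0)-\lambda)\ii\,(B_0(D)-B_0(D_0))\bigr)
\]
shows $B_0(D)-\lambda$ invertible for every $\lambda\in\Gamma$ once $(D,T)$ lies in a small enough $\cT$-neighbourhood $W$; hence $W\subset\cE_{\UCP,\Gamma}(X;E,F)$.

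For (ii): I would factor the map as $(D,T)\mapsto B_0(D)\mapsto P_\Gamma(B_0(D))$, where the second arrow is the map $P_{\Gamma_+}$ of Theorem~\ref{mt} with $\Gamma_+:=\Gamma$ and is continuous from the topology of Definition~\ref{d:topology} on $\pdo_{\SC}^1(M;E_M)$ to $\cB(H^s(M;E_M))$. It thus remains to prove that $(D,T)\mapsto B_0(D)$ is continuous from $\cT$ into $\pdo_{\SC}^1(M;E_M)$ equipped with the topology of Definition~\ref{d:topology}. By Remark~\ref{r:phi-zero} this means: $\sigma_1(B_0)$ varies continuously in $C^\infty(S^*M;\End(\pi^*E_M))$ --- which is precisely hypothesis~(ii) of Theorem~\ref{t:CalderonContinuous2} --- and the remainder $B_0-\Op(\sigma_1(B_0))$ varies continuously in each norm $\|\cdot\|_{k,k}$, $k\in\Z$. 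As $B_0$ is a differential operator, this remainder equals multiplication by its zeroth-order coefficient plus a fixed smoothing term coming from the cut-offs patched into $\Op$; in the difference of two such remainders the smoothing part is controlled in all Sobolev norms by $\sigma_1(B_0(D))-\sigma_1(B_0(D_0))$, hence by hypothesis~(ii), leaving only the difference of the zeroth-order coefficients to be estimated.

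The main obstacle is exactly this last estimate: controlling the zeroth-order coefficient of $B_0(D)$ in the norms $\|\cdot\|_{k,k}$ (equivalently, by complex interpolation, in $\|\cdot\|_{s,s}$; and since only $s\in[-1/2,1/2]$ enters Theorem~\ref{t:CalderonContinuous1}, only that range is needed). The weak metric $\dstr$ supplies $\|B_0(D)-B_0(D_0)\|_{1,0}$ and $\|B_0^t(D)-B_0^t(D_0)\|_{1,0}$ only, so the control of the zeroth-order part must be squeezed out of these together with the $C^\infty$-control of the first-order coefficients given by hypothesis~(ii) (and, where needed, the commutator term $\|[B_0^t,J_0^tT]\|_0$ occurring in $N_1$). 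This is the technical point on which the whole deduction hinges; everything else is either bookkeeping or a direct citation of Theorems~\ref{t:CalderonContinuous1} and~\ref{mt} and Remark~\ref{r:phi-zero}.
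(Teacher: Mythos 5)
Your overall strategy is exactly the paper's: the paper offers no proof beyond the single sentence ``Combining Theorem~\ref{t:CalderonContinuous1} with Theorem~\ref{mt} we obtain\dots'', and your proposal is precisely an elaboration of that sentence. The decomposition into (i) choosing a contour $\Gamma$ that works on a whole $\cT$-neighbourhood $W\subset\cE_{\UCP,\Gamma}$ and (ii) showing $(D,T)\mapsto P_\Gamma(B_0)$ is $\cT$-continuous on $W$, with (ii) factored through $(D,T)\mapsto B_0(D)$ and Theorem~\ref{mt}, is the right bookkeeping, and your Neumann-series argument for keeping $\Gamma$ inside the resolvent set on a neighbourhood is fine.

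Two remarks, one minor, one substantial.

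The minor one concerns step (i). Your claim that $\sigma_1(B_0)$ is self-adjoint with respect to the fibre metric $J_0^tT$ does not follow from the hypotheses of $\cE(X;E,F)$: ``$[J_0^tT,B_0]$ of order $0$'' only yields that $J_0^tT$ and $\sigma_1(B_0)$ commute, which by itself gives neither $J_0^tT$-self-adjointness nor real spectrum. The correct (and simpler) reason why rays near the imaginary axis exist is the ellipticity of $D$ itself: writing $\sigma_1(D)(0,y;\tau,\xi)=J_0\bigl(i\tau+\sigma_1(B_0)(y,\xi)\bigr)$ in the collar, invertibility for all $(\tau,\xi)\neq 0$ forces $\spec\sigma_1(B_0)(y,\xi)\cap i\R=\emptyset$ for $\xi\neq0$. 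Your subsequent conclusion is therefore correct, only the reason given for it is not.

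The substantial point is the one you yourself flag in your last paragraph, and it is a genuine gap rather than a mere ``technical point to be filled in''. To invoke Theorem~\ref{mt} (equivalently, to run the five-term estimate in Section~\ref{proof:mr} with $m=1$), you need the lower-order remainder $B_0(D)-\Op(\sigma_1(B_0(D)))$, which up to a smoothing operator controlled by $\sigma_1(B_0)$ is multiplication by the zeroth-order coefficient $c_0$ of $B_0$, to vary continuously in the operator norm $\|\cdot\|_{s,s}$ for the relevant $s$. What $\dstr$ and the adjoint term supply is $\|c_0-c_0'\|_{1,0}$ and (by duality) $\|c_0-c_0'\|_{0,-1}$, hence by interpolation $\|c_0-c_0'\|_{t+1,t}$ for $t\in[-1,0]$; hypothesis~(ii) of the Theorem controls only the first-order coefficients. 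None of this controls $\|M_{c_0-c_0'}\|_{s,s}$: for a multiplication operator the norm $\|\cdot\|_{t+1,t}$ is strictly weaker than $\|\cdot\|_{s,s}$, as the sequence $f_N=N^{-\epsilon}e^{iNx_1}$ shows, for which $\|M_{f_N}\|_{1,0}$ and $\|M_{f_N}\|_{0,-1}$ tend to $0$ while $\|M_{f_N}\|_{1/2,1/2}\sim N^{1/2-\epsilon}\to\infty$. Rebalancing the Sobolev exponents in the five-term estimate does not help, since one must still bridge from $H^s$ to $H^{s+1}$ and the lower-order factor inevitably carries a norm of the form $\|\cdot\|_{s+p,s+p}$. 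So either the topology $\cT$ in the statement of Theorem~\ref{t:CalderonContinuous2} must be strengthened (e.g.\ by adding a $C^0$- or $C^\infty$-control of the zeroth-order coefficient of $B_0$, as happens automatically in the $\cT^\infty$-version described in Remark~2 after the theorem), or an argument different from a straight appeal to Theorem~\ref{mt} is needed. Your proposal does not close this gap --- but to be fair, the paper itself does not address it either.
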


\begin{rem}
\noindent \textup{1. } The restrictions $s\in[-1/2,1/2]$ in Theorems \plref{t:CalderonContinuous1}
and \plref{t:CalderonContinuous2} are not serious. For other values of $s$ the metric
$\dstr$ has to be modified in a fairly straightforward way. 

\noindent \textup{2. }
Let $\cT^\infty$ be the coarsest topology on $\cE_{\UCP}(X;E,F)$ such that in each
chart the coefficient functions of a coordinate representation of $D\in\cE_{\UCP}(X;E,F)$
vary continuously in the $C^\infty$--topology (cf. e.g. \cite[Theorem 3.16]{HKL:CPH}).

Then it is a routine matter to check that $\cT^\infty$ is finer than the topology $\cT$ of
Theorem \plref{t:CalderonContinuous2}. In fact it is fine enough to guarantee
the continuity $C_+:(\cE_{\UCP}(X;E,F),\cT^\infty)\longrightarrow \cB(H^s(M,E_M))$
for \emph{all} real $s$ (cf. item 1. of this Remark).

This version of the continuous dependence of $C_+$, although strictly speaking
somewhat weaker than Theorem \plref{t:CalderonContinuous2}, is probably the most satisfactory
way of summarizing its content.

\end{rem}


\subsection*{Acknowledgment}
We are indebted to Prof. \textsc{Kenro Furutani} (Tokyo) for
initiating this note some years ago, by asking us about the
continuous variation of Cauchy data spaces, and to Prof.
\textsc{Gerd Grubb} (Copenhagen) and Prof. \textsc{Elmar Schrohe}
(Hannover) for various suggestions to this work. In particular, we 
wish to thank the referee for his or her criticism and
suggestions that helped to condensate our arguments and, hopefully, lead to an easier
readable note.

\appendix

\section{}\label{s:appendixA}
In this appendix, we provide the details of the proof of our
technical lemma \ref{mtl}. We first translate the wanted estimates
into statements about integral operators.

\subsection{$L^2$-estimates for integral operators and other estimates}\label{ss:l2-estimates}
We recall the well-known and very useful {\sc Schur}'s Test for
integral operators (see, e.g., {\sc Halmos} and {\sc Sunder}
\cite[Theorem 5.2]{HalSun:BIO}):

\begin{lemma}[Schur's Test] Let $K$ be an integral operator with measurable
kernel $k:\R^n\times\R^n\to\C$. Assume that
\[
\sup_{x\in\R^n}\int_{\R^n} |k(x,y)|dy \le C_1 <+\infty \tand
\sup_{y\in\R^n}\int_{\R^n} |k(x,y)|dx \le C_2 <+\infty.
\]
Then $K$ is bounded $L^2(\R^n)\to L^2(\R^n)$ and $\|K\|_{L^2\to L^2}
\le \sqrt{C_1C_2}$.

In particular, if for some $p>n$
\[
|k(x,y)|\le C_3(1+|x-y|)^{-p}\/,
\]
then the criterion is fulfilled with
\[
C_1=C_2=C_3\int_{\R^n}(1+|\xi|)^{-p}d\xi.
\]
\end{lemma}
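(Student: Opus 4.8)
The plan is to prove Schur's Test by the classical symmetrization trick --- writing $|k(x,y)| = |k(x,y)|^{1/2}\cdot|k(x,y)|^{1/2}$ and applying the Cauchy--Schwarz inequality in the $y$-variable --- followed by Tonelli's theorem to interchange the order of integration. First, for $u\in L^2(\R^n)$ and almost every $x\in\R^n$,
\[
|(Ku)(x)|\le \int_{\R^n}|k(x,y)|^{1/2}\,\bigl(|k(x,y)|^{1/2}|u(y)|\bigr)\,dy
\le \Bigl(\int_{\R^n}|k(x,y)|\,dy\Bigr)^{1/2}\Bigl(\int_{\R^n}|k(x,y)|\,|u(y)|^2\,dy\Bigr)^{1/2}.
\]
By the first hypothesis the leading factor is at most $C_1^{1/2}$, so squaring and integrating over $x$ yields
\[
\|Ku\|_{L^2}^2\le C_1\int_{\R^n}\int_{\R^n}|k(x,y)|\,|u(y)|^2\,dy\,dx .
\]

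Next I would apply Tonelli's theorem --- justified because $(x,y)\mapsto |k(x,y)|\,|u(y)|^2$ is nonnegative and measurable --- to swap the integrals and then use the second hypothesis on the inner one:
\[
\|Ku\|_{L^2}^2\le C_1\int_{\R^n}|u(y)|^2\Bigl(\int_{\R^n}|k(x,y)|\,dx\Bigr)\,dy \le C_1C_2\,\|u\|_{L^2}^2 .
\]
Taking square roots gives $\|K\|_{L^2\to L^2}\le\sqrt{C_1C_2}$, and the same chain of inequalities shows en route that $(Ku)(x)$ is finite for a.e.\ $x$, so $K$ is indeed a well defined bounded operator on $L^2(\R^n)$.

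For the ``in particular'' statement, suppose $|k(x,y)|\le C_3(1+|x-y|)^{-p}$ with $p>n$. The translations $\xi = y-x$ and $\xi = x-y$ are measure preserving on $\R^n$, so
\[
\sup_{x\in\R^n}\int_{\R^n}|k(x,y)|\,dy\le C_3\int_{\R^n}(1+|\xi|)^{-p}\,d\xi,\qquad
\sup_{y\in\R^n}\int_{\R^n}|k(x,y)|\,dx\le C_3\int_{\R^n}(1+|\xi|)^{-p}\,d\xi,
\]
and passing to polar coordinates the common value is a positive dimensional constant times $\int_0^\infty(1+r)^{-p}r^{n-1}\,dr$, which converges exactly when $p>n$. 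Thus both hypotheses hold with $C_1=C_2=C_3\int_{\R^n}(1+|\xi|)^{-p}\,d\xi$. No step here is delicate; the only points deserving a word of care are the measurability needed to invoke Tonelli and the observation that it is precisely the Cauchy--Schwarz step that fuses the two one-sided kernel bounds into the geometric mean $\sqrt{C_1C_2}$.
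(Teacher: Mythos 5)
Your proof is correct and is the standard one: symmetrize the kernel as $|k|^{1/2}\cdot|k|^{1/2}$, apply Cauchy--Schwarz in $y$ to peel off the factor $C_1^{1/2}$, then Tonelli and the second hypothesis to finish, with the ``in particular'' clause following from translation invariance and the convergence of $\int_{\R^n}(1+|\xi|)^{-p}\,d\xi$ for $p>n$. The paper itself gives no proof of this lemma --- it simply cites Halmos and Sunder --- so there is no in-paper argument to compare against; your write-up supplies the standard argument cleanly and correctly.
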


Now fix $U\< \R^n$ open, $K\< U$ compact and
$a\in\sym_K^m(U\times\R^n)$. Then {\sc Schur}'s Test yields an
effective estimate for $\|\Op(a)\|_{s,s-m}$\/.

To explain that, we introduce some notations. For the Fourier
transform, we shall follow {\sc H{\"o}rmander}'s convention
\begin{multline*}
\bigl(\cF f\bigr)(\xi):=\int_{\R^n} e^{-i\lla x,\xi\rra} f(x)
dx,\quad
\bigl(\cF\ii u\bigr)(x):=(2\pi)^{-n}\int_{\R^n} e^{i\lla x,\xi\rra} u(\xi) d\xi\\
\dbar \xi:=(2\pi)^{-n}d\xi.
\end{multline*}
Then we have
\begin{multline*}
\bigl(\cF \Op(a)u\bigr)(\eta):=\int_{\R^n} e^{-i\lla
x,\eta\rra}\bigl(\Op(a)u\bigr)(x) dx\\ =\int_{\R^n}\Bigl[\int_U
e^{i\lla x,\xi-\eta\rra}a(x,\xi)\dbar x\Bigr] \wh u(\xi) d\xi.
\end{multline*}
We set $q_a(\xi-\eta,\xi):=[\cdots]$ in the preceding formula and
define

\begin{dfn}\label{d:qa}
For $a\in\sym_K^m(U\times\R^n)$, we set
\[
q_a(\zeta,\xi):=\bigl(\cF\ii_{x\to\zeta} a(x,\xi)\bigr)(\zeta) =
\int_{K} e^{i\lla\zeta,x\rra} a(x,\xi)\dbar x.
\]
\end{dfn}
Since $a(x,\xi)$ is nonzero at most if $x\in K$ the integral
certainly exists. The method we are going to employ is adapted from
\cite[Lemma 1.2.1]{Gil:ITH}. Lemma 1.2.1 (b) of loc. cit. shows that
$q_a(\zeta,\xi)$ decays to arbitrarily high powers (reproved below)
in $\zeta$ (as $\zeta\to\infty$) and is polynomially bounded in
$\xi$. Hence all integrals below converge in the usual sense.

Consequently, the kernel of the integral operator $\cF\Op(a)\cF\ii$ is given
by $ k_a(\tau,\xi):=q_a(\xi-\tau,\xi). $ To estimate the operator
norm $\|\cdot\|_{s,s-m}$ of $\Op(a)$ it suffices therefore to
estimate the norm of the operator $\cF\Op(a)\cF\ii$ as a
map from the weighted $L^2$-space $L^2(\R^n,(1+\|\xi\|^2)^s)$ into
$L^2(\R^n,(1+\|\xi\|^2)^{s-m})$. By {\sc Schur}'s test an estimate
of the form
\begin{equation}\label{e:schur-est}
\bigl|(1+|\tau|)^{s-m} k_a(\tau,\xi)(1+|\xi|)^{-s}\bigr|\le
C(a)C(p)(1+|\tau-\xi|)^{-p} \qquad\text{ for some $p>n$}
\end{equation}
implies
\[
\|\Op(a)\|_{s,s-m}\le C(a)\wt C(p) \qquad\text{ with $\wt
C(p):=C(p)\int_{\R^n}(1+|x|)^{-p}dx$}.
\]

\begin{proof}[Proof of the Main Technical Lemma]
Let $f(\cdot,\cdot,\gl)\in \sym^r_K(U\times \R^n)$,
$g(\cdot,\cdot,\gl)\in \sym^m_K(U\times \R^n)$ satisfying
(\ref{f:e}), (\ref{g:e}) be given. In the sequel we will suppress
the argument $\lambda$ from the notation for simplicity. We should
be aware that all expressions will depend on $\lambda$ unless
otherwise stated. The kernel of the operator $\cF\Op(f)\Op(g)\cF\ii$ is given by
\[
k_{f\cdot g}(\tau,\xi)=\int_{\R^n}
k_f(\tau,\eta)k_g(\eta,\xi)d\eta=\int_{\R^n}
q_f(\eta-\tau,\eta)q_g(\xi-\eta,\xi)d\eta.
\]
On the other hand
\begin{align*}
q_{f\cdot g}(\zeta,\xi)&=\int_{\R^n}
e^{i\lla\zeta,x\rra}f(x,\xi)g(x,\xi)\dbar x\\
&=\cF\ii\bigl(f(\cdot,\xi) g(\cdot,\xi)\bigr)(\zeta)\\
&=\int_{\R^n} q_f(\zeta-\eta,\xi)q_g(\eta,\xi)d\eta,
\end{align*}
respectively,
\begin{align*}
k_{f\cdot g}(\tau,\xi)&=q_{f\cdot g}(\xi-\tau,\xi)\\
&=\int_{\R^n} q_f(\xi-\tau-\eta,\xi)q_g(\eta,\xi)d\eta; \quad
\xi-\eta
\rightsquigarrow \eta\\
&= \int_{\R^n} q_f(\eta-\tau,\xi)q_g(\xi-\eta,\xi)d\eta.
\end{align*}
Thus the kernel of $\cF\Bigl\{ \Op(f)\Op(g)-\Op(f\cdot
g)\Bigr\}\cF\ii$ is given by
\begin{equation}\label{e:kernel1}
k(\tau,\xi,\lambda):=\int_{\R^n} \Bigl\{q_f(\eta-\tau,\eta)-
q_f(\eta-\tau,\xi)\Bigr\} q_g(\xi-\eta,\xi)d\eta.
\end{equation}

We are now going to estimate this kernel. The estimate of $q_g$ is
standard: for any multiindex $\alpha$, $\zeta\in\R^n$ we have (for
$D^\ga_x:=-i \partial^{\ga_1+\dots\ga_n}/\partial
x_1^{\ga_1}\dots\partial x_n^{\ga_n}$, as usual):
\begin{equation*}
|\zeta^{\alpha}q_g(\zeta,\xi,\lambda)|=\left|\int_K
e^{i\lla\zeta,x\rra}D^{\alpha}_x g(x,\xi,\lambda)\dbar x\right| \leq
\vol(K)C_{\alpha}(g)(1+|\xi|+|\lambda|^{1/m})^{-m}\/.
\end{equation*}
Since $\alpha$ is arbitrary, we see that for any $N\in \N$
\begin{equation}\label{e:qg}
|q_g(\zeta,\xi,\lambda)|\leq \wt
C_N(g)(1+|\zeta|)^{-N}(1+|\xi|+|\lambda|^{1/m})^{-m}.
\end{equation}
Next we discuss the difference
$q_f(\zeta,\eta,\lambda)-q_f(\zeta,\xi,\lambda)$. Again for a
multiindex $\alpha$ we have
\begin{multline*}
|\zeta^{\alpha}(q_f(\zeta,\eta,\lambda)-q_f(\zeta,\xi,\lambda))|
=\left|\int_K
e^{i\lla\zeta,x\rra}\{D_x^{\alpha}(f(x,\eta,\lambda)-f(x,\xi,\lambda))\}\dbar
x\right|\\
\leq \int_K\,
\sup_{t\in[0,1],\,|\beta|=1}|D_x^{\alpha}\partial_{\xi}^{\beta}f(x,\xi+t(\eta-\xi),\lambda)|\dbar
x\, |\xi-\eta|\\
\leq C
\sup_{t\in[0,1]}(1+|\xi+t(\eta-\xi)|)^{m-1}(1+|\xi+t(\eta-\xi)|+|\lambda|^{\frac{1}{m}})^{r-m}
\, |\xi-\eta|
\end{multline*}
with $C:= \vol(K)C_{N}(f)$ and $N:=\max(|\alpha|,1)$, that is,
\begin{multline}\label{e:qf}
|q_f(\zeta,\eta,\lambda)-q_f(\zeta,\xi,\lambda)|\leq
\vol(K)C_N(f)(1+|\zeta|)^{-N}|\xi-\eta|\cdot\\
\sup_{t\in[0,1]}(1+|\xi+t(\eta-\xi)|)^{m-1}(1+|\xi+t(\eta-\xi)|+|\lambda|^{\frac{1}{m}})^{r-m}.
\end{multline}
To  estimate the norm of $\Op(f)\Op(g)-\Op(f\cdot g)$ as an operator
from $H^s$ to $H^{s+m-r}$ we need to estimate the norm of the
integral operator in $L^2(\R^n)$ whose kernel is given by (see
\eqref{e:schur-est})
$$\wt k
(\tau,\xi,\lambda)=(1+|\tau|)^{s+m-r}k(\tau,\xi,\lambda)(1+|\xi|)^{-s},$$
where $k(\tau,\xi,\lambda)$ is defined in (\ref{e:kernel1}). From
(\ref{e:kernel1}), (\ref{e:qg}) and (\ref{e:qf}) we infer
\begin{multline}\label{e:i1} |\wt k(\tau,\xi,\lambda)| \leq
C_N(f)\wt C_N(g)\int
(1+|\eta-\tau|)^{-N}|\xi-\eta|\;(1+|\xi-\eta|)^{-N}\\
\cdot (1+|\xi|+|\lambda|^{\frac{1}{m}})^{-m} (1+|\tau|)^{s+m-r}\\
\cdot(1+|\xi|)^{-s} \sup_{t\in
[0,1]}(1+|\xi+t(\eta-\xi)|)^{m-1}(1+|\xi+t(\eta-\xi)|+|\lambda|^{\frac{1}{m}})^{r-m}
\dbar \eta. \end{multline}

Note that we may choose $N$ as large as we please. We now distinguish two cases.\\
 \emph{Case I}: $|\eta-\xi|\leq \frac{1}{2}|\xi|$. Then for $0\leq t\leq 1$, $\frac{1}{2}|\xi|\leq |\xi+t(\eta-\xi)|\leq\frac{3}{2}|\xi|$, and thus the integrand of the right hand side of (\ref{e:i1}) can be estimated (absorbing another constant into $ C_N(f)\wt C_N(g)$) by
 \begin{multline}\label{e:i2}
 \leq C_N(f)\wt C_N(g)(1+|\eta-\tau|)^{-N}(1+|\xi-\eta|)^{1-N}(1+|\tau|)^{s+m-r}\\
 (1+|\xi|)^{-s+m-1}(1+|\xi|+|\lambda|^{\frac{1}{m}})^{r-2m}\/.
 \end{multline}
 Using {\sc Peetre}'s Inequality (we suppress the constant), we have
 $$(1+|\tau|)^{s+m-r}(1+|\xi|)^{-s+m-1}\leq (1+|\tau-\xi|)^{|s+m-r|} (1+|\xi|)^{2m-r-1}.$$ Then (\ref{e:i2})
 \begin{multline}\label{e:i3}
 \leq  C_N(f)\wt C_N(g)(1+|\eta-\tau|)^{-N}(1+|\xi-\eta|)^{1-N}(1+|\tau-\xi|)^{|s+m-r|}\\
 (1+|\xi|)^{2m-r-1}(1+|\xi|+|\lambda|^{\frac{1}{m}})^{r-2m}.
 \end{multline}
 For $0\leq r \leq m$,
 $$(1+|\xi|)^{2m-r-1}(1+|\xi|+|\lambda|^{\frac{1}{m}})^{r-2m}\leq \begin{cases}(1+|\lambda|^{\frac{1}{m}})^{-1},
 & 2m-r-1\leq 0,\\
 (1+|\lambda|^{\frac{1}{m}})^{-m}, & 2m-r-1>0.\end{cases}$$
 Thus (\ref{e:i3})
 \begin{multline}\label{e:i4}
 \leq  C_N(f)\wt C_N(g)(1+|\eta-\tau|)^{-N}(1+|\xi-\eta|)^{1-N}(1+|\tau-\xi|)^{|s+m-r|}\\
 (1+|\lambda|)^{-\min({\frac{1}{m}},1)}.
 \end{multline}
 Again {\sc Peetre}'s Inequality (once again suppressing the constant) gives that for $N>n+1$,
 \begin{align*}
 \int_{\R^n}(1+|\eta-\tau|)^{-N}&(1+|\xi-\eta|)^{1-N}\dbar\eta\\
 &\leq\int_{\R^n} (1+|\eta|)^{-N}(1+|\xi-\eta-\tau|)^{1-N+n}\dbar\eta\\
 &\leq \int_{\R^n} (1+|\eta|)^{-n-1}(1+|\xi-\tau|)^{1-N+n}\dbar\eta.
 \end{align*}
Taking this into account and integrating the right side of
(\ref{e:i4}) over $\eta$ yields
 \begin{multline}
 \int_{|\eta-\xi|\leq\frac{1}{2}|\xi|}\cdots\dbar \eta
 \leq C_N(f)\wt C_N(g)\int (1+|\eta|)^{-n-1}\dbar\eta\\
 (1+|\xi-\tau|)^{1+n+|s+m-r|-N}(1+|\lambda|)^{-\min({\frac{1}{m}},1)}\/.
 \end{multline}
 Here we choose $N$ large enough such that $N>n+1+|s+m-r|$.\\
 \emph{Case II}: $|\eta-\xi|>\frac{1}{2}|\xi|$. Then the integrand of the right hand side of (\ref{e:i1}) is estimated by
  \begin{multline*}
 \leq C_N(f)\wt C_N(g)(1+|\eta-\tau|)^{-N}(1+|\xi-\eta|)^{m-N}(1+|\tau|)^{s+m-r}\\
 (1+|\xi|)^{-s+m-1}(1+|\lambda|^{\frac{1}{m}})^{r-2m}\/.
 \end{multline*}
 Since $\frac{1}{2}|\xi|< |\eta-\xi|$, we estimate
 $$(1+|\xi|)^{-s+m-1}\leq \begin{cases}\,\;\;\;\;\;\;1,&-s+m-1\leq0,\\
 C_{s,m}(1+|\xi-\eta|)^{-s+m-1}, &-s+m-1>0.\end{cases}$$
 Now we proceed as in Case I.

 In sum we have proved that for $N$ large enough,
 $$|\wt k(\tau,\xi,\lambda)|\leq C_N(f)\wt C_N(g)(1+|\xi-\tau|)^{-n-1}(1+|\lambda|)^{-\min({\frac{1}{m}},1)}.$$
 The lemma follows from {\sc Schur}'s test finally.
\end{proof}

\section{}\label{s:appendixB}
We shall explain a topological obstruction which excludes repeating
\textsc{Seeley}'s construction literally and which was overlooked by
various authors (see, for example, \cite{Woj:SFG}, \cite{NSSS:SBV}
and \cite{Pon:SAZ}).

Given the two rays of minimal growth $L_{\ga_j}, j=1,2$ with $\spec
a_m(x,\xi)\cap L_{\ga_j}=\emptyset$ for $x\in M, \xi\in T^*_xM,
\xi\ne 0, j=1,2$, we are guaranteed a symbol ``ingredient''
$(a_m(x,\xi)-\gl)\ii$ of order $-m$ for each $\gl\in L_{\ga_1}\cup
L_{\ga_2}$ and for $\xi\ne 0$\,. Moreover, we can find a small arc
of radius $R$ connecting the two rays such that the resulting curve
$\gG_+$ belongs to the resolvent set of $A$, as explained above.

\subsection{The problem}
It might be tempting to look for a smooth deformation and extension
$\wt a$ of $a(x,\xi)$ to $\xi=0$ in such a way that for {\em all}
$(x,\xi)\in T^*M$ one has
\[
\spec \wt a(x,\xi)\cap\gG_+=\emptyset. \] Actually, we may choose
$R>0$ such that $\spec a(x,\xi)\cap \gG_+=\emptyset$ for, say,
$|\xi|=1$. Then the problem arises whether such map
\begin{gather}\label{e:gamma-symbol}
a(x,\cdot):S^{n-1}\to \cM(N,\gG_+), \\ \cM(N,V):=\{a\in\cM(N)\mid
\spec a\cap V=\emptyset\}, V\<\C
\end{gather}
can be extended over the whole $n$-dimensional ball to a map $\wt
a:B^n\to \cM(N,\gG_+)$ in a continuous way. In the preceding, $x\in
M$ is fixed, $\dim M=n$, the fibre dimension of the Hermitian bundle
is $\dim E_x=N$, $\cM(N)$ denotes the space of $N\times N$ matrices
with complex entries,  and the matrix spaces inherit the topology of
$\C^{N^2}$. We assume that we are given a trivialization of the
cotangent bundle $T^*_xM=\R^n$ and of the fibre $E_x=\C^N$\,.

\subsection{A one-dimensional counterexample}
The most simple one-dimensional example $A:=-i\frac d{d\theta}$ on
$M=S^1\,, N=1$ refutes that naive hope. $A$ is the tangential
operator for the Cauchy-Riemann operator on  the 2-ball $\{|z|\le
1\}$. We have $a(\theta,\xi)=\xi$ with $\spec
a(\theta,\xi)=\{\xi\}$, $\spec A= \Z$,  and the imaginary line
$i\R=L_{\pi/2}\cup L_{3\pi/2}$ as spectral cut for $a(\theta,\xi),
\xi\ne 0$. Clearly, we cannot get anything useful, if we multiply
$a$ just by a cut-off function leading to
\[ \wt a(\theta,\xi)=\begin{cases} \xi \text{ for }|\xi|\ge 1,\\ 0
\text{ for }|\xi|\le \gve .\end{cases}
\]
By the Intermediate Value Theorem, for each $R\in (0,1)$ there will
always be a $\hat\xi\in (\gve,1)$ such that $\wt
a(\theta,\hat\xi)=R$. However, if we exempt only one ray, say
$L_{\pi/2}$ instead of the whole imaginary line, we {\em can} deform
the given $a(\cdot,\cdot):S^1\times (\R\setminus (-1,1)) \to
\cM(1,L_{\pi/2})$ into
\begin{equation}\label{e:seeley-deform}
\begin{matrix}
\wt a(\cdot,\cdot)&:&S^1\times \R &\too & \cM(1,L_{\pi/2}),\\
\quad &\ & (\theta,\xi) &\mapsto &
\begin{cases} \xi, & \text{ for $|\xi|\ge 1$,}\\
e^{-i(1-\xi)\frac {\pi}2}\/, & \text{ for $0\le |\xi|< 1$}.
\end{cases}
\end{matrix}
\end{equation}
Here the point is that we only require that $\wt a(\theta,\xi)$ has
no purely non-negative eigenvalues. What we did was a spectral
deformation of the original matrices (here complex numbers) into the
point $\{-i\}$. Clearly, that deformation breaks down, if we have
two rays of minimal growth forming a separating curve in $\C$: There
is no continuous path connecting $\{1\}$ and $\{-1\}$ that is not
crossing the imaginary line. The topological obstruction for $n=1$
is simply that the space $\cM(1,i\R)$ has two connected components,
$(-\infty,0), (0,\infty)$ and that $a(\theta,1), a(\theta,-1)$
belong to different components.

\subsection{The essence of the topological obstruction}
Let us muse upon the cases $n,N>1$.
Shortly, the essence of the topological difficulties
overlooked by our predecessors is the following: Without loss of
generality, let $\gG_+$ be the imaginary line $i\R$. Fix a {\em
non-trivial} smooth complex vector bundle $G$ on the sphere
$S^{n-1}$ (or on the sphere cotangent bundle $S^*M$ over the
$n$-dimensional manifold $M$ -- for simplicity, however, we shall
ignore the spatial variables). Next, we embed $G$ into a trivial
bundle $S^{n-1}\times \C^k$ for $k$ sufficiently large. Let
$\{P_\xi\}_{\xi\in S^{n-1}}$ denote the smooth family of
self-adjoint projections of $\C^k$ onto the fibers $G_\xi, \xi\in
S^{n-1}$\,.

Set $a(\xi):=2P_\xi-I:\C^k\to\C^k$ and extend it, say by homogeneity
$1$ to $\R^n$ and smooth it out in 0. Then this is an elliptic
symbol with the two imaginary half-axes being rays of minimal
growth. More precisely, we have $\spec a(\xi)=\{-1,1\}, \xi\in
S^{n-1},$ and $E_{1,\xi}=G_\xi$ and $E_{-1,\xi}=G_\xi^\perp$, where
$E_{\gl,\xi}$ denotes the linear span of the eigenvectors of
$a(\xi)$ for $\gl\in\spec a(\xi)$.

Then it is impossible to find a $k\times k$ matrix valued function
$\wt a$ on the whole $\R^n$ which coincides with $a$ outside a large
ball such that $\spec \wt a(\xi)\cap \gG_+=\emptyset$ for all
$\xi\in\R^n$: Let us assume we could. Let $\wt E_{\gL_+,\xi}=\ran
\wt P_+(\xi)$ denote the linear span of all root vectors of $\wt
a(\xi)$ for eigenvalues in the positive half plane $\gL_+\<\C$. The
family of vector subspaces of $\C^k$ is continuous and forms a
vector bundle over the unit ball $B^n$. It is trivial because the
base space is contractible, but its restriction on the $n-1$ sphere
is $G$ which is by assumption non-trivial. That is a contradiction.
So, we have a necessary condition for the construction to work.

Since \textsc{Seeley} only dealt with one ray of minimal growth,
this problem did not occur there.

Therefore, we cannot expect to be able to make the wanted extension,
respectively deformation in general. Instead of the direct (and
futile) search for a suitable modification of the principal symbol
to get a well-defined resolvent for $A$ along the spectral cut
$\gG_+$ we shall apply the symbolic calculus solely to obtain a
parametrix for $A-\gl$.

\subsection{The topology of the underlying space of hyperbolic matrices}
As a service to the reader we determine the precise homotopy type of
the matrix space $\cM(N,\gG_+)$. By deformation, we may assume that
the imaginary line is the given spectral cut for all matrices
$a(x,\xi)$ for $\xi\ne 0$. In $\C\setminus \gG_+$\,, we denote the
two complementary sectors by $\gL_\pm$\,. Then the space
$\cM(N,i\R)$ of $N\times N$ matrices with no purely imaginary
(generalized) eigenvalues decomposes into $N+1$ connected components
\begin{equation}\label{e:connected-components1}
\cM_k(N,i\R):=\{a\in\cM(N,i\R)\mid \dim \ran P^+(a)=k\},\quad
k=0,1,\dots N,
\end{equation}
where \begin{equation}\label{e:spectral-proj-N}
\begin{matrix}
P^+&:&\cM(N,i\R)=:\cE &\too& \cP(N)\\
 \quad& & a &\longmapsto & -\frac 1{2\pi i}\int_{\gG_+}(a-\gl I)\ii \,
 d\gl.
\end{matrix}
\end{equation}
Here $\cP(N)=\cup_{k=0}^N\cP_k(N)$ denotes the space of projections
(idempotent $N\times N$ matrices, fibred according to the dimension
of their ranges) and $P^+(a)$ denotes the projection onto the
generalized eigenspaces of $a$ for generalized eigenvalues in the
positive sector $\gL_+$\/.

For $k=0$ and $k=N$, the spaces $\cM_k(N,i\R)$ are homeomorphic to
the full space $\cM(N)$ of all square matrices and hence
contractible. That explains why \textsc{Seeley}'s deformation is
always possible for one ray of minimal growth, dividing $\C$ into
one sector without spectrum and one sector with all the eigenvalues,
see once again Fig. \ref{f:gamma-plus}b.

To investigate the homotopy type of $\cM_k(N,i\R)$ for $k=1,\dots
N-1$, we restrict the map \eqref{e:spectral-proj-N} to a single
component $\cM_k(N,i\R)$. We obtain a fibration of the total space
$\cM_k(N,i\R)$ as a fibre bundle over the base $\cP$ with
contractible fibre
\begin{equation*}\label{e:fiber}
(P^+)\ii\{P_0\}= \{a\in\cM(\ran P_0)\mid \spec a\< \gL_+\} \times
\{a\in\cM(\ker P_0)\mid \spec a\< \gL_-\}
\end{equation*}
for any $P_0\in \cP_k(N)$. Hence, the topological spaces, the base
$\cP_k(N)$ and the total space $\cM_k(N,i\R)$ have the same homotopy
type. By orthogonalization, it suffices to consider a projection
space made of orthogonal projections which easily can be identified
with the subspaces of $\C^N$ of dimension $k$. So we arrive at the
complex Grassmannian $\operatorname{Gr}_{\C}(N,k)$, which is known
for non-trivial homotopy, if $0<k<N$.

\bibliography{localbib}
\bibliographystyle{abbrv}

\end{document}